 \newenvironment{proof}  {\par\noindent{\bf Proof}\ }
                                  {\hfill$\Box$\par\medskip}
\newcommand{\Ker}{\mathop{\mathrm{Ker}}\nolimits}
\newcommand{\image}{\mathop{\mathrm{Im}}\nolimits}
\title{Binary operations for homotopy groups with coefficients
 }
\author{ARKOWITZ Martin}
\date{ }
\begin{document}

\maketitle
{\it Department of Mathematics, Dartmouth College, Hanover, NH 03755, USA;
\newline Email: martin.arkowitz@dartmouth.edu}

Accepted for publication in Science China - Mathematics 

{\bf Abstract}   We define and study binary operations for homotopy groups with coefficients. We give conditions to prove that certain binary operations are the 
homomorphic image of the generalized Whitehead product. This allows carrying over properties of the generalized Whitehead product to these 
operations. We discuss two classes of binary operations, the Whitehead products and the Torsion
products. We introduce a new class of operations called Ext operations 
and determine some of its properties. We compare
the Torsion product to the Whitehead product in a special case. We prove that the smash product of two Moore spaces has the homotopy type of a wedge of two Moore spaces.
\smallskip

{\bf Keywords} homotopy groups with coefficients, binary operations, the generalized Whitehead product, Whitehead products, Torsion products

\noindent{\bf MSC} 55Q05, 55Q35
\bigskip

\newtheorem{prop}{Proposition}[section]
\newtheorem{lemma}[prop]{Lemma}
\newtheorem{theorem}[prop]{Theorem}
\newtheorem{corollary}[prop]{Corollary}
\newtheorem{rmk}[prop]{Remark}
\newtheorem{definition}[prop]{Definition}
 
\section{Introduction}\noindent  Products, such as the cup product for cohomology groups or the Whitehead product for homotopy groups, are important binary operations in 
Algebraic Topology. The cup product is defined for cohomology groups with coefficients, whereas the Whitehead product is usually defined for integral homotopy groups. In spite of 
the fact that the concept of homotopy groups with coefficients has been available for some time, there has been very little work on binary operations for homotopy groups with 
coefficients (two exceptions are \cite{h} and  \cite{n}). Our object in this paper is to discuss in some detail such binary operations and to provide a unifying method for studying them. 
\medskip

\noindent The following is a brief outline of the paper. We shall refer to homotopy groups with coefficients as homotopy groups and binary operations as operations.  After a preliminary section, 
the generalized Whitehead product and generalized Whitehead product map are recalled. We show that the map can be extended to a map of a cone into a product of suspensions. We 
next define the general notion of an operation for homotopy groups. A class of operations, called basic operations, is defined and it is proved that these are homomorphic images of the 
generalized Whitehead product. We consider operations that have been restricted further (special operations) with a view to
studying two particular classes of operations, the Whitehead products and the Torsion products. In the case when an operation is the image of a generalized Whitehead product, we give 
necessary and sufficient conditions for it to be a special operation. This is then applied to Whitehead and Torsion
products. In the final section, we discuss a number of topics related to earlier sections. We first 
compare the Torsion product to Neisendorfer's Whitehead product \cite{n} when all coefficient groups are cyclic of the same odd prime power order. We consider in more detail the class of operations called Ext operations which were introduced earlier in the paper.
Next we establish a homotopy equivalence between the smash product of two Moore spaces and the wedge of two (different) Moore spaces. Finally we briefly discuss the difference between using Moore spaces or Co-Moore spaces to obtain coefficients.

\section{Preliminaries} \noindent In this section we present our notation and assumptions. All spaces are assumed to be based and of the homotopy type of based CW-complexes and all groups are assumed to be abelian. Maps and homotopies are to preserve base points. The base point is generically denoted by $*$. We let $[f]$ denote the homotopy class of the map $f$ and $f\simeq g$ signifies that $f$ and $g$ are homotopic,  but notationally we often ignore the distinction between maps and homotopy classes. For example, an expression containing a mix of maps and homotopy classes refers to the homotopy class determined by the expression. We write $\Sigma X$ for the (reduced) suspension of the space $X$ and $CX  =X\times I/\{ * \}\times I\cup X\times \{ 1 \}$ for the (reduced) cone. Also $X\vee Y$ denotes the wedge and $X \wedge Y$ the smash product. Furthermore, the join $X*Y$ is the quotient of
$X\times Y\times I$ with the equivalence relations $(x,y,0)\sim (x,y',0)$ and $(x,y,1) \sim (x',y,1)$ and base point
given by $\{ * \}\times \{ * \} \times I$. We use $``\approx "$ for isomorphism of groups and $``\equiv "$ for same homotopy type. We let $[X,Y]$ be the set of homotopy classes of maps from $X$ to $Y.$ A map $f$ induces a homomorphism $f_*$ of homology groups and a homomorphism $f_{\#}$ of homotopy groups. 
For homomorphisms of groups $h:G'\to G $ and $k: H\to H'$, we let $h^*: \mathrm{Hom}(G,H)\to \mathrm{Hom}(G',H)$
and $k_*: \mathrm{Hom}(G,H)\to \mathrm{Hom}(G,H')$ be the induced homomorphisms.
We denote by $\mu ' :A*B \to \Sigma (A\wedge B)$ the homotopy equivalence  obtained by collapsing the subset  
$(A\times \{ * \}\times I)\cup (\{ * \}\times B\times I) $ to a point. The homotopy inverse of $\mu '$ is denoted $\mu :\Sigma (A \wedge B) \to A*B.$
Let $G$ be a group and $n$ an integer $\ge 2$. A {\bf Moore space} $M(G,n)$ is a simply-connected space with a single
non-vanishing reduced homology group $G$ in degree $n$. The {\bf $n$th homotopy group of $X$ with coefficients $G$}, denoted
$\pi _n(X;G)$, is defined to be $[M(G,n), X]$. If $(X;A)$ is a pair of spaces then the homotopy group 
$\pi _n(X,A;G)$ is defined as the set of homotopy classes of maps $(CM(G,n-1),M(G,n-1))\to (X,A).$
We shall refer several times to the Universal Coefficient Theorem for homotopy groups:
\smallskip

\noindent{\bf Theorem} There is a short exact sequence
$$\xymatrix{0\ar[r] &\mathrm{Ext}(G,\pi _{n+1}(X)) \ar[r]^-{\lambda } & \pi _n(X;G) \ar[r]^-{\eta  } & \mathrm{Hom}(G,\pi _n(X))\ar[r] & 0}$$
where $\eta [f] = f_{\#} :G\to \pi _n(X)$ (\cite{h}, p.\,30). When $X$ is replaced by a pair of spaces, the sequence is also exact.

 \section{Generalized Whitehead products}

\noindent Let $A$, $B$, and $X$ be spaces and let $\alpha \in [\Sigma A,X]$ and $\beta \in [\Sigma B,X]$. The generalized Whitehead product \cite{a} of 
$\alpha $ and $\beta $ is an element $[\alpha ,\beta]\in [\Sigma (A\wedge B), X]$  and is defined as follows. 
Let $\alpha $ be represented by $f:\Sigma A \to X$ and $\beta $ represented by $g:\Sigma B \to X$
and let $p_1:A\times B \to A$ and $p_2:A\times B \to B$ be the projections. Then $f' = f(\Sigma p_1)$ and $g' = g(\Sigma p_2)$ map 
$\Sigma (A\times B)$ to $X.$ We define $c =(f',g') = f'^{-1}g'^{-1}f'g',$ the commutator of $f'$ and $g'.$
Let $j:A \vee B \to A\times B$ be the inclusion map and $q:A\times B \to A\wedge B$ the quotient map. 
Clearly $(\Sigma j)^*(c) =0.$ Thus there is a unique element $[\alpha ,\beta]$ such that $(\Sigma q)^*[\alpha ,\beta]
=c$.
When $A$ and $B$ are spheres, this is
just the ordinary Whitehead product. Next let $\iota _1 \in [\Sigma A, \Sigma A\vee \Sigma B]$ 
and $\iota _2 \in [\Sigma B, \Sigma A\vee \Sigma B]$ be the inclusions. Then $[\iota _1, \iota _2] \in [\Sigma (A\wedge B), \Sigma A\vee \Sigma B]$
is called the {\bf universal element} for the generalized Whitehead product. If $f$ represents $\alpha $ and $g$ represents $\beta $, then $[\alpha , \beta] 
=(f,g)_{*}[\iota _1, \iota _2]$, where $(f,g) : \Sigma A\vee \Sigma B \to X$ is the map determined by $f$ and $g$ and $(f,g)_{*}$ is the induced map $[\Sigma (A \wedge B), \Sigma A \vee \Sigma B] \to [\Sigma (A \wedge B), X].$ Thus any generalized Whitehead product is the image of
the universal element. We choose a map $\widetilde{k}: \Sigma (A\wedge B) \to \Sigma A\vee \Sigma B$ in the homotopy class $[\iota _1 , \iota _2 ]$ and 
call it the {\bf generalized Whitehead product map}.
\begin{theorem}\label{3.1}  There is a map $\Lambda : C(A \ast B) \to \Sigma A \times \Sigma B$ such that, if $\Lambda | A*B : A\ast B \to \Sigma A\vee \Sigma B$ is denoted by $\lambda $, then 
 \begin{enumerate}
\item $\lambda \mu  \simeq \widetilde{k}:\Sigma (A \wedge B) \to \Sigma A \vee \Sigma B,$
\item $\Lambda $ induces $\overline{\lambda}:\Sigma (A*B) \to \Sigma A \wedge \Sigma B$ such that $\overline{\lambda}\simeq  \sigma (\Sigma \mu '):\Sigma (A* B) \to \Sigma A \wedge \Sigma B$, where $\sigma: \Sigma ^2(A \wedge B) \to \Sigma A \wedge \Sigma B$
is the homeomorphism given by $\sigma ((a,b),t,u) =((a,t),(b,u))$, for $a\in A$, $b\in B$, and $t, u \in I$.
\end{enumerate}
\end{theorem}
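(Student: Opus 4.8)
The plan is to build $\Lambda$ as a composite $C(A\ast B)\to CA\times CB\to\Sigma A\times\Sigma B$, starting from the standard identification of the join with the boundary of the bicone, $A\ast B=\partial(CA\times CB)=(A\times CB)\cup_{A\times B}(CA\times B)$, where $A=A\times\{0\}\subseteq CA$ and likewise for $B$. Let $p_A\colon CA\to CA/A=\Sigma A$ and $p_B\colon CB\to CB/B=\Sigma B$ be the quotient maps. Then $p_A\times p_B\colon CA\times CB\to\Sigma A\times\Sigma B$ sends $A\times CB$ into $\ast\times\Sigma B$ and $CA\times B$ into $\Sigma A\times\ast$, hence carries $A\ast B$ into $\Sigma A\vee\Sigma B$. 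Since $CA\times CB$ is contractible, the inclusion $A\ast B\hookrightarrow CA\times CB$ extends over the cone to a map $\Psi\colon C(A\ast B)\to CA\times CB$; I would set $\Lambda=(p_A\times p_B)\circ\Psi$ and $\lambda=\Lambda|A\ast B=(p_A\times p_B)|A\ast B$.

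For part (1): the map $\lambda\colon A\ast B\to\Sigma A\vee\Sigma B$ is the classical ``collapse'' description of the generalized Whitehead product map --- on the four faces of $\partial(CA\times CB)$ it traces out the commutator $(f',g')$ of the maps $f'$, $g'$ of the excerpt --- so, composed with the homotopy equivalence $\mu'\colon A\ast B\to\Sigma(A\wedge B)$, it is homotopic to $\widetilde{k}$. I would either quote this from \cite{a} or check it directly when $A$, $B$ are spheres and extend by naturality. Granting $\lambda\simeq\widetilde{k}\mu'$, and using that $\mu$ is a homotopy inverse of $\mu'$, we get $\lambda\mu\simeq\widetilde{k}\mu'\mu\simeq\widetilde{k}$.

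For part (2): since $\Lambda(A\ast B)\subseteq\Sigma A\vee\Sigma B$, the map $\Lambda$ induces $\overline{\lambda}\colon C(A\ast B)/(A\ast B)=\Sigma(A\ast B)\to(\Sigma A\times\Sigma B)/(\Sigma A\vee\Sigma B)=\Sigma A\wedge\Sigma B$, and it factors through the map induced by $\Psi$. Now $p_A\times p_B$ carries the boundary $A\ast B$ of $CA\times CB$ into $\Sigma A\vee\Sigma B$ and, via the homeomorphism $(X\times Y)/(X_0\times Y\cup X\times Y_0)\cong(X/X_0)\wedge(Y/Y_0)$, induces on quotients the identity of $\Sigma A\wedge\Sigma B$; hence $\overline{\lambda}$ equals the map $\overline{\Psi}\colon\Sigma(A\ast B)\to(CA\times CB)/(A\ast B)=\Sigma A\wedge\Sigma B$ induced by $\Psi$ (and this is independent of the choice of extension, two choices differing by an element of $[\Sigma(A\ast B),CA\times CB]=0$). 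Under the standard homotopy equivalence of pairs $(CA\times CB,A\ast B)\simeq(C(A\ast B),A\ast B)$ the map $\Psi$ becomes a self-map of $C(A\ast B)$ that restricts to the identity on $A\ast B$, hence is homotopic rel $A\ast B$ to the identity (the obstruction lies in $[\Sigma(A\ast B),C(A\ast B)]=0$); thus $\overline{\Psi}$ is identified with the canonical map $\Sigma(A\ast B)\to(CA\times CB)/(A\ast B)=\Sigma A\wedge\Sigma B$. It then remains to recognize this canonical map as $\sigma\,\Sigma\mu'$ up to homotopy, which by naturality reduces to the case $A=S^{p-1}$, $B=S^{q-1}$, where source and target are both $S^{p+q}$ and the point is to pin down a single sign.

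The formal skeleton --- existence of $\Psi$, the quotient identifications, the vanishing of the obstruction groups --- is routine. The step I expect to be the main obstacle is the last one in each part: matching the collapse map with $\widetilde{k}\mu'$ and the cone-off map with $\sigma\,\Sigma\mu'$ \emph{with the correct orientation}, i.e.\ carrying a single consistent set of conventions for $I$, $CX$, $\Sigma X$, $X\wedge Y$, $X\ast Y$, $\mu$, $\mu'$, $\sigma$, and for the co-$H$ comultiplication used to form the commutator, all the way through, so that no stray sign appears. One way to make part (2) transparent is to write $\Lambda$ down in coordinates, $\Lambda[a,b;t;s]=(\langle a,u(t,s)\rangle,\langle b,v(t,s)\rangle)$, with $(u,v)\colon(I\times I,\partial)\to(I\times I,\partial)$ a fixed degree-one self-map whose $s=0$ edge wraps $\partial(I\times I)$ once: then $\overline{\lambda}$ is $\sigma\,\Sigma\mu'$ precomposed with $(u,v)$ in the suspension coordinates, and every degree-one self-map of $(I\times I,\partial)$ is homotopic through such maps to the identity, so $\overline{\lambda}\simeq\sigma\,\Sigma\mu'$.
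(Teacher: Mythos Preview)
Your construction of $\Lambda$ via the bicone $CA\times CB$ and an arbitrary cone-extension $\Psi$ is more conceptual than the paper's, which simply quotes Cohen's explicit formula for $\Lambda$ and, for part~(2), writes down an explicit linear homotopy $\Phi_s$ between $\overline\lambda$ and $\sigma(\Sigma\mu')$ in coordinates. Your restriction $\lambda=(p_A\times p_B)|_{A\ast B}$ does coincide with the paper's $\lambda$, and for (1) both you and the paper defer to \cite{a}. One small slip: $\partial(CA\times CB)=A\times CB\cup CA\times B$ has two pieces, not four, and this two-piece collapse map is not literally the four-term commutator $(f',g')$; that the two descriptions agree after composing with $\mu$ is precisely the content of Lemma~4.1 and Theorem~2.4 of \cite{a}, which is why both you and the paper must cite it rather than read it off.

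The real gap is in your primary argument for (2). Having reduced $\overline\lambda$ to the canonical equivalence $\Sigma(A\ast B)\to(CA\times CB)/(A\ast B)\cong\Sigma A\wedge\Sigma B$, you assert that comparing this with $\sigma\,\Sigma\mu'$ ``by naturality reduces to the case $A=S^{p-1}$, $B=S^{q-1}$''. Naturality alone does not justify such a reduction: two natural transformations that agree on spheres need not agree in general, and you supply no mechanism (for instance, a classification of natural self-equivalences of $\Sigma^2(A\wedge B)$ by $\pi_2(S^2)$) to close the gap. Your fallback---writing $\Lambda$ in coordinates as $((a,u(t,s)),(b,v(t,s)))$ for a self-map $(u,v)$ of $(I^2,\partial I^2)$ and homotoping it to the identity---is exactly the paper's method and is what actually works; but even there you still owe the verification that $(u,v)$ has degree $+1$ rather than $-1$ with the chosen conventions, which the paper settles by simply exhibiting the linear homotopy $\Phi_s$.
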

\begin{proof} The function $\Lambda $ was defined by D. Cohen (\cite{c}, Theorem 2.4) 
as follows:
$$ \Lambda ((a,b,t),u) =\left\{ 
\begin{array}{ll}
((a,u),(b,1-2t(1-u)))  & {\mathrm{if}} \, \,  0\le t \le \frac{1}{2}\\
((a, 1-2(1-t)(1-u)),(b,u)) & {\mathrm{if}}  \, \, \frac{1}{2} \le t \le 1,
\\
\end{array} \right.
$$
for $a\in A$, $b\in B$, and $t, u \in I$. The proof of (1) is an immediate consequence of Lemma 4.1 and the proof of 
Theorem 2.4 of \cite{a}. For the proof of (2) we define a (linear) homotopy between $\overline{\lambda}$ and $\sigma  (\Sigma \mu ')$:
$$ \Phi _s (x) =\left\{ 
\begin{array}{ll}
((a,(1-s)u +st),(b,(1-s)(1-2t(1-u))+su))  \quad \,  \,\,\,{\mathrm{if}} \,\, 0\le t \le \frac{1}{2}\\
((a,(1-s) (1-2(1-t)(1-u))+st),(b,u)),   \qquad  \quad \quad {\mathrm{if}} \,\, \frac{1}{2} \le t \le 1.
\\
\end{array} \right.
$$
where $x=((a,b,t),u) \in \Sigma(A*B)$ and $s\in I$.

\end{proof}

\section{Binary operations}
\noindent  Let $G_1$, $G_2$, and $G_3$ be groups and $q_1$, $q_2$, and $q_3$ be integers.
A {\bf (binary) operation} of type $\{G_1, G_2, G_3; q_1, q_2, q_3\}$ is function $T$ which, for every space $X$ and integers $q_1, q_2\ge 2$, assigns
to each $\alpha \in \pi _{q_1}(X;G_1)$ and $\beta \in \pi _{q_{2}}(X;G_2)$, an element $T(\alpha , \beta ) \in \pi _{q_3}(X;G_3)$ 
(also written $T_X(\alpha , \beta ) $) such that if $f:X\to Y$ is a map, then $f_{\#}T_X(\alpha , \beta )
= T_Y(f_{\#}(\alpha ), f_{\#}(\beta ))$. In the examples, $q_3$ will be a simple function of $q_1$ and
$q_2$ such as $ q_1 +q_2 +C$, for some constant $C$.
Let $M_i = M(G_i,q_i)$, $i=1,2, $ or $3$, let $\iota _1 \in \pi _{q_1}(M_1\vee M_2;G_1)$ and $\iota _2 \in \pi _{q_2}(M_1\vee M_2;G_2)$
be the inclusions, and let $[f] =\alpha \in \pi _{q_1}(X;G_1)$ and $[g] = \beta \in \pi _{q_{2}}(X;G_2).$ Then $(f,g)_{\#}T(\iota _1, \iota_2)=T(\alpha , \beta )$. We call $T(\iota _1, \iota_2)
 \in \pi _{q_3}(M_1\vee M_2;G_3)$ the {\bf universal element} for $T$.
\smallskip

\noindent Next 
let the set of all operations of type $\{G_1, G_2, G_3; q_1, q_2, q_3\}$ be denoted 
{\bf O} = {\bf O}$\{G_1, G_2, G_3; q_1, q_2, q_3\}$. If $T$ and $T'$ are two such operations, then $T + T'$ defined by $(T+T')(\alpha , \beta ) = T(\alpha , \beta )+T'(\alpha , \beta )$ is
also in {\bf O}. Thus {\bf O} is an abelian group. Furthermore, the function from {\bf O} to $\pi _{q_3}(M_1\vee M_2;G_3)$ which 
sends $T$ to $T(\iota _1, \iota_2)$ is easily seen to be an isomorphism.

\noindent \begin{definition} Let $\partial :\pi _{q_{{3}+1}}(M_1\times M_2, M_1\vee M_2;G_3) \to \pi _{q_3}(M_1\vee M_2;G_3)$
be the boundary homomorphism in the homotopy sequence of $(M_1\times M_2, M_1\vee M_2)$, let $T$ be an operation as above, and assume
that $q_1,q_2 \ge 3$.
Then $T$ is called a {\bf basic operation} if $T(\iota _1, \iota_2) \in \image \,\partial $, the image of 
$\partial $.
\end{definition}
\smallskip

\noindent Next let $ \bar M_1 = M(G_1, q_1 -1)$ and $\bar M_2 = M(G_2, q_2 -1)$, so $\Sigma \bar M_1 = M_1$ and $\Sigma \bar M_2 = M_2$.
\begin{theorem}\label{theta}  If $T$ is a basic operation and $q_3 <q_1+q_2 +\min(q_1,q_2)-3$, then there exists a unique element $\theta _T\in \pi _{q_3}(\Sigma (\bar M_1 \wedge \bar M_2);G_3)$
such that $T(\alpha ,\beta) = [\alpha , \beta] \, \theta _T = h^* [\alpha , \beta]$, 
for every space $X$, where $\alpha \in [\Sigma \bar M_1, X]$, $\beta \in [\Sigma \bar M_2, X]$, $[h] =\theta _T,$ and $h^*:[\Sigma (\bar M_1 \wedge \bar M_2),X]\to \pi _{q_3}(X;G_3)$ is induced by $h$. Furthermore, $h$ is a suspension and so $h^* $ is a homomorphism. 
\end{theorem}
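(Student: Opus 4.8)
The plan is to identify $\image\,\partial$ with the image of a single monomorphism induced by the generalized Whitehead product map, using the map $\Lambda$ of Theorem~\ref{3.1} with $A=\bar M_1$ and $B=\bar M_2$ as the comparison. Write $W=\Sigma(\bar M_1\wedge\bar M_2)$ and $W'=\bar M_1\ast\bar M_2$, so $\mu':W'\to W$ is a homotopy equivalence with inverse $\mu$; recall that $\widetilde{k}:W\to M_1\vee M_2$ represents $[\iota_1,\iota_2]$, and that $(M_1\times M_2)/(M_1\vee M_2)=M_1\wedge M_2=\Sigma\bar M_1\wedge\Sigma\bar M_2$. Theorem~\ref{3.1} supplies a map of pairs $(\Lambda,\lambda):(CW',W')\to(M_1\times M_2,M_1\vee M_2)$ with $\lambda\mu\simeq\widetilde{k}$, whose induced map on the quotients $CW'/W'=\Sigma W'\to M_1\wedge M_2$ is $\overline\lambda\simeq\sigma(\Sigma\mu')$, a homotopy equivalence; in particular $\rho\Lambda=\overline\lambda p$, where $p:CW'\to\Sigma W'$ and $\rho:M_1\times M_2\to M_1\wedge M_2$ are the collapses. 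The goal is to show that $(\Lambda,\lambda)$ induces an isomorphism on $\pi_{q_3+1}(\,\cdot\,;G_3)$ and that $\partial$ is a monomorphism; naturality of the boundary homomorphism will then transport $\image\,\partial$ onto $\image\,\widetilde{k}_{\#}$, where $\widetilde{k}_{\#}:\pi_{q_3}(\Sigma(\bar M_1\wedge\bar M_2);G_3)\to\pi_{q_3}(M_1\vee M_2;G_3)$ will turn out to be a monomorphism.

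For the first goal I would record the connectivities: $\bar M_i$ is $(q_i-2)$-connected, hence $\bar M_1\wedge\bar M_2$ is $(q_1+q_2-3)$-connected, $W\simeq W'$ is $(q_1+q_2-2)$-connected, and $(CW',W')$ is $(q_1+q_2-1)$-connected; also $M_1\vee M_2$ is $(\min(q_1,q_2)-1)$-connected, and since $M_1\wedge M_2$ is $(q_1+q_2-1)$-connected and the spaces are simply connected, the pair $(M_1\times M_2,M_1\vee M_2)$ is $(q_1+q_2-1)$-connected. The Blakers--Massey theorem then makes $p$ and $\rho$ induce isomorphisms on integral $\pi_i$ for $i\le 2(q_1+q_2)-3$ and for $i\le q_1+q_2+\min(q_1,q_2)-2$, respectively; combining this with the Universal Coefficient Theorem for pairs and the five lemma, $p_{\#}$ is an isomorphism on $\pi_{q_3+1}(\,\cdot\,;G_3)$ automatically, while $\rho_{\#}$ is an isomorphism on $\pi_{q_3+1}(\,\cdot\,;G_3)$ precisely because $q_3<q_1+q_2+\min(q_1,q_2)-3$. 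From $\rho_{\#}(\Lambda,\lambda)_{\#}=\overline\lambda_{\#}p_{\#}$ with $\overline\lambda$ a homotopy equivalence, $(\Lambda,\lambda)_{\#}$ is then an isomorphism in degree $q_3+1$. For the second goal, the inclusions $M_1,M_2\hookrightarrow M_1\vee M_2$ split the map $\pi_i(M_1\vee M_2;G)\to\pi_i(M_1\times M_2;G)$ for every $i$ and $G$, so that map is onto; by exactness of the homotopy sequence of the pair, $\partial$ is a monomorphism with image the kernel of $\pi_{q_3}(M_1\vee M_2;G_3)\to\pi_{q_3}(M_1\times M_2;G_3)$. Naturality of $\partial$ for $(\Lambda,\lambda)$, with the boundary isomorphism $\pi_{q_3+1}(CW',W';G_3)\cong\pi_{q_3}(W';G_3)$ coming from the contractibility of $CW'$, then shows that $\lambda_{\#}:\pi_{q_3}(W';G_3)\to\pi_{q_3}(M_1\vee M_2;G_3)$ is a monomorphism with image $\image\,\partial$; pre-composing with the isomorphism $\mu_{\#}$ and using $\lambda\mu\simeq\widetilde{k}$, the same holds for $\widetilde{k}_{\#}$.

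Since $T$ is basic, $T(\iota_1,\iota_2)\in\image\,\partial=\image\,\widetilde{k}_{\#}$, so there is a unique $\theta_T\in\pi_{q_3}(\Sigma(\bar M_1\wedge\bar M_2);G_3)$ with $\widetilde{k}_{\#}(\theta_T)=T(\iota_1,\iota_2)$; let $h$ represent $\theta_T$. For any space $X$ and $\alpha=[f]\in[\Sigma\bar M_1,X]$, $\beta=[g]\in[\Sigma\bar M_2,X]$, naturality of $T$ gives $T(\alpha,\beta)=(f,g)_{\#}T(\iota_1,\iota_2)=(f,g)_{\#}\widetilde{k}_{\#}(\theta_T)=((f,g)\widetilde{k})_{\#}(\theta_T)$; since $(f,g)\widetilde{k}$ represents $(f,g)_{*}[\iota_1,\iota_2]=[\alpha,\beta]$, this equals $[\alpha,\beta]\circ h=h^{*}[\alpha,\beta]$, which is the asserted formula.

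It remains to arrange that $h$ is a suspension. Here $M(G_3,q_3)=\Sigma\bar M_3$, the space $\bar M_1\wedge\bar M_2$ is $(q_1+q_2-3)$-connected, and $\dim\bar M_3=q_3$ satisfies $q_3\le 2(q_1+q_2-3)+1$ under the hypothesis; so by the Freudenthal suspension theorem the suspension homomorphism $[\bar M_3,\bar M_1\wedge\bar M_2]\to[\Sigma\bar M_3,\Sigma(\bar M_1\wedge\bar M_2)]=\pi_{q_3}(\Sigma(\bar M_1\wedge\bar M_2);G_3)$ is onto, $\theta_T$ is a suspension, and $h$ may be chosen to be one (when $q_3=2$ the target group is already zero and $h$ is constant). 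A suspension map is a map of co-$H$-spaces for the suspension comultiplications, so pre-composition with $h$ is a homomorphism, which is the last assertion. The step I expect to be the real work is matching the Blakers--Massey and Universal Coefficient ranges so that $q_3<q_1+q_2+\min(q_1,q_2)-3$ is recognized as exactly the condition making $\rho_{\#}$ an isomorphism in degree $q_3+1$; granting that, the naturality squares, the splitting forcing $\partial$ to be injective, and the Freudenthal argument are all routine.
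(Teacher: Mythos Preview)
Your proof is correct and follows essentially the same route as the paper's: both use Theorem~\ref{3.1} to compare the cone pair over $\bar M_1\ast\bar M_2$ (or equivalently over $\Sigma(\bar M_1\wedge\bar M_2)$, via $\mu$) with $(M_1\times M_2,M_1\vee M_2)$, invoke Blakers--Massey in the stated range to identify the relative homotopy groups with those of the quotients, observe that $\partial$ is injective from the splitting of $\pi_*(M_1\vee M_2;G_3)\to\pi_*(M_1\times M_2;G_3)$, and conclude that $\widetilde{k}_{\#}$ is a monomorphism with image exactly $\image\,\partial$. The only cosmetic difference is that the paper packages the comparison as a commutative square involving $\delta=\partial\,p_{\#}^{-1}$ and $\sigma_{\#}$, whereas you phrase it as ``$(\Lambda,\lambda)_{\#}$ is an isomorphism on relative groups plus naturality of $\partial$''; the content is identical, and your Freudenthal argument for the suspension assertion matches the paper's appeal to the generalized suspension theorem.
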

\begin{proof}
Consider the diagram
$$
\xymatrix{\Sigma (\bar M_1\wedge \bar M_2)\ar[r] \ar[d]^{\lambda \mu } & C\Sigma (\bar M_1\wedge \bar M_2) \ar[r]^-{p'}\ar[d]^{\Lambda ( C\mu)} & \Sigma ^2(\bar M_1\wedge \bar M_2) \ar[d]^{\overline{\lambda}  (\Sigma \mu )}\\
\Sigma \bar M_1 \vee \Sigma \bar M_2\ar[r] & \Sigma \bar M_1 \times \Sigma \bar M_2 \ar[r]^p & \Sigma \bar M_1 \wedge \Sigma \bar M_2,}
$$
where each row is a cofiber sequence, the squares commute, and $p$ and $p'$ are projections. By the Blakers-Massey 
Theorem (\cite{h}, p.\,49), if $r<q_1+q_2+\min(q_1,q_2)-3 $, then both 
$p'_{\#}: \pi _{r+1}(C\Sigma (\bar M_1\wedge \bar M_2),\Sigma (\bar M_1\wedge \bar M_2);G_3)\to \pi _{r+1}(\Sigma ^2(\bar M_1\wedge \bar M_2);G_3)$ and 
$p_{\#}: \pi _{r+1}(\Sigma \bar M_1 \times \Sigma \bar M_2,\Sigma \bar M_1\vee
\Sigma \bar M_2;G_3)\to \pi _{r+1}(\Sigma \bar M_1\wedge \Sigma \bar M_2;G_3)$ 
 are isomorphisms. Therefore the exact homotopy sequences with coefficients of the pairs $(C\Sigma (\bar M_1\wedge \bar M_2),
\Sigma (\bar M_1\wedge \bar M_2))$ and $(\Sigma \bar M_1 \times \Sigma \bar M_2,\Sigma \bar M_1 \vee \Sigma \bar M_2)$, together with the homomorphism of the
first sequence into the second sequence determined by the map $\Lambda (C\mu )$, yield the following commutative square
$$
\xymatrix{\pi _{q_3+1}(\Sigma ^2(\bar M_1\wedge \bar M_2);G_3)\ar[r]^-{\delta '} \ar[d]^{(\overline{\lambda } \Sigma \mu )_{\#}=\sigma _{\#} }& \pi _{q_3}(\Sigma (\bar M_1\wedge \bar M_2);G_3) \ar[d]^{(\lambda \mu)_{\#} =\widetilde{k}_{\#}}
\\ \pi _{q_3+1}(\Sigma \bar M_1 \wedge \Sigma \bar M_2;G_3)\ar[r]^-{\delta } & \pi _{q_3}(\Sigma \bar M_1 \vee \Sigma \bar M_2;G_3),}
$$
where
$p_{\#}^{-1}: \pi _{q_3+1}(\Sigma \bar M_1\wedge \Sigma \bar M_2;G_3) \to \pi _{q_3+1}(\Sigma \bar M_1 \times \Sigma \bar M_2,\Sigma \bar M_1\vee
\Sigma \bar M_2;G_3)$ and $\partial : \pi _{q_3+1}(\Sigma \bar M_1 \times \Sigma \bar M_2,\Sigma \bar M_1\vee \Sigma \bar M_2;G_3) \to \pi _{q_3}(\Sigma \bar M_1 \vee \Sigma \bar M_2;G_3).$
Then  $\delta = \partial \, p_{\#}^{-1}$ and $\delta '$ is similarly defined. Clearly $\delta'$ and $\sigma_{\#}$ are isomorphisms. In addition, it follows from the exact sequence
of $(\Sigma \bar M_1 \times \Sigma \bar M_2,\Sigma \bar M_1 \vee \Sigma \bar M_2)$ that $\delta $ is one-one. Thus $\bar{k}_{\#}$ is one-one and
$\image \widetilde{k}_{\#} = \image \delta = \image \partial p_{\#}^{-1} = \image \partial $. But $T(\iota_1,\iota_2) \in \image \partial $.
Thus there is a unique $\theta _T=[h] \in \pi _{q_3}(\Sigma (\bar M_1 \wedge \bar M_2);G_3)$ with $T(\iota_1 ,\iota_2) = [\iota _1, \iota_2] \, \theta _T = h^* [\iota _1, \iota_2]$. If $f:\Sigma \bar M_1 \to X$ and  $g:\Sigma \bar M_2 \to X$ represent $\alpha $ and $\beta $ respectively, then
$[\alpha , \beta] \, \theta _T = (f,g)_{\#}[\iota _1, \iota_2] \, \theta _T= (f,g)_{\#} T(\iota_1,\iota_2) = T(\alpha ,\beta) $. The second assertion of the theorem is a consequence of the generalized suspension theorem since the dimension of $M(G_3, q_3-1)$ is $\le q_3$ and $q_3< q_1+q_2 +\min(q_1,q_2)-3$.
\end{proof}

\noindent \begin{rmk}\label{BM}  {\em If $G_3$ is a free-abelian group, then the conclusion of Theorem \ref{theta}  holds when $q_3 =q_1+q_2 +\min(q_1,q_2)-3$. This is also true for the conclusion of subsequent results in which this strict inequality appears. This is because the Blakers-Massey Theorem holds in this case.}
\end{rmk}
\begin{corollary}\label{equivs} Let $T$ be an operation
of type $\{ G_1, G_2, G_3 ; q_1, q_2, q_3  \}$ and let $\alpha ,\alpha '\in \pi _{q_1}(X;G_1)$ and 
$\beta ,\beta '\in \pi _{q_2}(X;G_2)$. Consider the following statements:
\begin{enumerate}
\item $T$ is basic;
\item $ j_{\#} T(\iota _1,\iota _2)= 0$, where $j: M_1\vee M_2 \to M_1 \times M_2 $ is the inclusion;
\item $T$ is bi-additive: $T(\alpha +\alpha ', \beta ) = T(\alpha , \beta ) +T(\alpha ', \beta )$ and $T(\alpha , \beta +\beta ')=
T(\alpha , \beta )+T(\alpha , \beta ')$;
\item $T(\alpha , 0)=0$ and $T(0,\beta) =0.$

\end{enumerate}
Then (1) $\Longleftrightarrow $ (2) and (3)$\Longrightarrow $ (4) $\Longrightarrow $(2). If in addition
$q_3 < q_1+q_2 +\min (q_1,q_1) -3$, then (2) $\Longrightarrow $ (3), and in this case all four statements are equivalent.
\end{corollary}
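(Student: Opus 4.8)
The plan is to establish the implications in the order $(3)\Rightarrow(4)\Rightarrow(2)\Rightarrow(1)$, then $(1)\Rightarrow(2)$, and finally $(2)\Rightarrow(3)$ under the dimension hypothesis, closing the cycle. The implications not requiring the dimension restriction are essentially formal. For $(3)\Rightarrow(4)$: bi-additivity gives $T(\alpha,0)=T(\alpha,0+0)=T(\alpha,0)+T(\alpha,0)$, so $T(\alpha,0)=0$, and symmetrically $T(0,\beta)=0$. For $(4)\Rightarrow(2)$: apply naturality to the folding-type map. Precisely, let $r_1:M_1\times M_2\to M_1$ and $r_2:M_1\times M_2\to M_2$ be the projections, so $r_1 j:M_1\vee M_2\to M_1$ restricts to the identity on $M_1$ and the constant map on $M_2$. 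Then $(r_1 j)_{\#}\iota_1$ is the fundamental class of $\pi_{q_1}(M_1;G_1)$ and $(r_1 j)_{\#}\iota_2 = 0$; using naturality of $T$, $(r_1 j)_{\#}T(\iota_1,\iota_2) = T((r_1j)_{\#}\iota_1, 0) = 0$ by $(4)$, and similarly $(r_2 j)_{\#}T(\iota_1,\iota_2)=0$. Since $M_1\times M_2$ retracts (through dimension roughly $q_1+q_2$, which covers $q_3$) onto $M_1\vee M_2$ via the pair of projections — more carefully, $j_{\#}T(\iota_1,\iota_2)\in\pi_{q_3}(M_1\times M_2;G_3)$ is detected by its two projections in the relevant range — we conclude $j_{\#}T(\iota_1,\iota_2)=0$, which is $(2)$.

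For $(2)\Leftrightarrow(1)$: this is the homotopy exact sequence of the pair $(M_1\times M_2, M_1\vee M_2)$ with coefficients $G_3$. The relevant segment reads
$$
\pi_{q_3+1}(M_1\times M_2, M_1\vee M_2;G_3)\xrightarrow{\ \partial\ }\pi_{q_3}(M_1\vee M_2;G_3)\xrightarrow{\ j_{\#}\ }\pi_{q_3}(M_1\times M_2;G_3),
$$
so $\image\,\partial = \Ker\, j_{\#}$; hence $T(\iota_1,\iota_2)\in\image\,\partial$ iff $j_{\#}T(\iota_1,\iota_2)=0$, which is exactly $(1)\Leftrightarrow(2)$. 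I should note here that this uses $q_1,q_2\ge 3$ so that $M_1\vee M_2$ is simply connected and the relative homotopy groups with coefficients and their exact sequence behave as stated in the Preliminaries.

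The substantive implication is $(2)\Rightarrow(3)$ under the hypothesis $q_3 < q_1+q_2+\min(q_1,q_2)-3$, and this is where the work lies. Assuming $(2)$, hence $(1)$, Theorem \ref{theta} applies and yields $\theta_T=[h]\in\pi_{q_3}(\Sigma(\bar M_1\wedge\bar M_2);G_3)$ with $T(\alpha,\beta)=h^*[\alpha,\beta]$ for all $\alpha\in[\Sigma\bar M_1,X]$, $\beta\in[\Sigma\bar M_2,X]$, and moreover $h^*$ is a homomorphism since $h$ is a suspension. Bi-additivity of $T$ then follows from bi-additivity of the generalized Whitehead product together with the fact that $h^*$ is additive: $T(\alpha+\alpha',\beta)=h^*[\alpha+\alpha',\beta]=h^*([\alpha,\beta]+[\alpha',\beta])=h^*[\alpha,\beta]+h^*[\alpha',\beta]=T(\alpha,\beta)+T(\alpha',\beta)$, and symmetrically in the second variable. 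So the main obstacle is simply invoking Theorem \ref{theta} correctly — and, if one wants bilinearity directly rather than through $\theta_T$, verifying that the generalized Whitehead product $[-,-]:[\Sigma\bar M_1,X]\times[\Sigma\bar M_2,X]\to[\Sigma(\bar M_1\wedge\bar M_2),X]$ is bi-additive, which is standard (it follows from the commutator description, or from the fact that $[\iota_1,\iota_2]$ composed with the co-H structures behaves correctly). Finally, the last sentence of the corollary is immediate: once $(2)\Rightarrow(3)$ is available, we have $(1)\Leftrightarrow(2)\Leftrightarrow(3)$ and $(3)\Rightarrow(4)\Rightarrow(2)$, so all four are equivalent.
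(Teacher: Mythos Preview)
Your proof is correct and follows essentially the same route as the paper: the exact sequence of the pair for $(1)\Leftrightarrow(2)$, the trivial additivity argument for $(3)\Rightarrow(4)$, projection onto the factors for $(4)\Rightarrow(2)$, and Theorem~\ref{theta} together with bi-additivity of the generalized Whitehead product for $(2)\Rightarrow(3)$. One small wording issue: $M_1\times M_2$ does not retract onto $M_1\vee M_2$, and no dimension restriction is needed in the step $(4)\Rightarrow(2)$ --- the isomorphism $\pi_{q_3}(M_1\times M_2;G_3)\cong \pi_{q_3}(M_1;G_3)\oplus\pi_{q_3}(M_2;G_3)$ via the projections holds in all degrees, which is exactly what your ``more carefully'' parenthetical uses.
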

\begin{proof}

(1) $\Longleftrightarrow $ (2): This is an immediate consequence of the exactness of the homotopy sequence of the pair $(M_1\times M_2, M_1\vee M_2)$. 
 
(3)$\Longrightarrow $ (4): $ T(\alpha , 0)=T(\alpha , 0+0) =T(\alpha , 0) +T(\alpha , 0)$ and so
$T(\alpha , 0) =0$. $T(0, \beta ) =0$ is similar. 

(4)$\Longrightarrow $ (2): Let $j_k:M_k\to M_1 \times M_2$ be the inclusions and $p_k: M_1 \times M_2
\to M_k$ be the projections, $k=1,2$. Then $$ j_{\#} T(\iota _1,\iota _2) = T(j_1, j_2) \in \pi _{q_3}(M_1 \times M_2; G_3).$$ But $T(j_1, j_2) =0  \Longleftrightarrow   p_{1\# }T(j_1, j_2) =0 \mathrm{\; and \;  } 
p_{2\# }T(j_1, j_2) =0. $ However by (4), $p_{1\# }T(j_1, j_2) =T(p_1j_1, 0)=0$ and similarly $p_{2\# }T(j_1, j_2) =0.$ This proves (2).

(2) $\Longrightarrow $ (3): Since $q_3 < q_1+q_2 +\min (q_1,q_1) -3$ and since $T$ is basic, (3) follows from Theorem \ref{theta} and the bi-additivity of the generalized Whitehead product (\cite{a}, p.\,14).
\end{proof}

\noindent Next let {\bf BO} = {\bf BO}$\{G_1, G_2, G_3; q_1, q_2, q_3\}$ be the set of all basic operations of type $\{G_1, G_2, G_3; q_1, q_2, q_3\}$. Clearly {\bf BO} $\subseteq $ {\bf O} is a subgroup.
\begin{corollary} If $q_3 <q_1+q_2 +\min(q_1,q_2)-3$, then there is an isomorphism from {\bf BO} to $\pi _{q_3}(\Sigma (\bar M_1 \wedge \bar M_2);G_3)$.
\end{corollary}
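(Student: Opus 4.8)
The plan is to show that the assignment $T\mapsto\theta_T$ furnished by Theorem \ref{theta} is itself the required isomorphism; everything needed is already present in the proof of that theorem, together with the fact that the function sending $T$ to $T(\iota_1,\iota_2)$ is an isomorphism from {\bf O} onto $\pi_{q_3}(M_1\vee M_2;G_3)$.

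First I would assemble the facts in hand. Since $T\mapsto T(\iota_1,\iota_2)$ is an isomorphism from {\bf O} onto $\pi_{q_3}(M_1\vee M_2;G_3)$, and since by definition $T$ is basic precisely when $T(\iota_1,\iota_2)\in\image\,\partial$, this isomorphism carries the subgroup {\bf BO} isomorphically onto the subgroup $\image\,\partial$ of $\pi_{q_3}(M_1\vee M_2;G_3)$. On the other hand, the proof of Theorem \ref{theta} establishes, under the hypothesis $q_3<q_1+q_2+\min(q_1,q_2)-3$, that $\widetilde k_{\#}:\pi_{q_3}(\Sigma(\bar M_1\wedge\bar M_2);G_3)\to\pi_{q_3}(M_1\vee M_2;G_3)$ (where $M_i=\Sigma\bar M_i$) is a monomorphism whose image is exactly $\image\,\partial$; hence $\widetilde k_{\#}$ is an isomorphism of $\pi_{q_3}(\Sigma(\bar M_1\wedge\bar M_2);G_3)$ onto $\image\,\partial$. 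Finally, for a basic operation $T$ the element $\theta_T=[h]$ is characterized by the relation $T(\iota_1,\iota_2)=h^*[\iota_1,\iota_2]=\widetilde k_{\#}(\theta_T)$.

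Next I would define a function $\Theta$ from {\bf BO} to $\pi_{q_3}(\Sigma(\bar M_1\wedge\bar M_2);G_3)$ by $\Theta(T)=\theta_T$, and check that it is a homomorphism: for $T,T'$ in {\bf BO} one has
$$(T+T')(\iota_1,\iota_2)=T(\iota_1,\iota_2)+T'(\iota_1,\iota_2)=\widetilde k_{\#}(\theta_T)+\widetilde k_{\#}(\theta_{T'})=\widetilde k_{\#}(\theta_T+\theta_{T'}),$$
so the injectivity of $\widetilde k_{\#}$ --- equivalently, the uniqueness clause of Theorem \ref{theta} --- forces $\theta_{T+T'}=\theta_T+\theta_{T'}$. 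In fact $\Theta$ is just the composite of the isomorphism from {\bf BO} onto $\image\,\partial$ given by $T\mapsto T(\iota_1,\iota_2)$ with the inverse of the isomorphism $\widetilde k_{\#}$ onto $\image\,\partial$, which already shows $\Theta$ is bijective; but I would also verify this directly. For injectivity: if $\theta_T=0$ then $T(\iota_1,\iota_2)=\widetilde k_{\#}(0)=0$, whence $T=0$. For surjectivity: given $\theta$ in $\pi_{q_3}(\Sigma(\bar M_1\wedge\bar M_2);G_3)$, the element $\widetilde k_{\#}(\theta)$ lies in $\image\,\partial$, so there is a unique operation $T$ with $T(\iota_1,\iota_2)=\widetilde k_{\#}(\theta)$; this $T$ is then basic, and the uniqueness of $\theta_T$ in Theorem \ref{theta} gives $\theta_T=\theta$.

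I do not anticipate a genuine obstacle: the corollary is a bookkeeping consequence of Theorem \ref{theta} and of the isomorphism between {\bf O} and $\pi_{q_3}(M_1\vee M_2;G_3)$. The points needing care are purely organizational --- keeping the identifications consistent, in particular that $M_i=\Sigma\bar M_i$ and that the chosen map $\widetilde k$ represents the universal generalized Whitehead product $[\iota_1,\iota_2]$, so that $h^*[\iota_1,\iota_2]$ and $\widetilde k_{\#}(\theta_T)$ name the same element of $\pi_{q_3}(M_1\vee M_2;G_3)$ --- and observing that $\image\,\partial$, being the image of a homomorphism, is a subgroup, so that the restricted maps used above are defined.
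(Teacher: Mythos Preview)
Your proof is correct and follows essentially the same route as the paper: both identify {\bf BO} with $\image\,\partial$ via $T\mapsto T(\iota_1,\iota_2)$ and then invoke the injectivity of $\widetilde k_{\#}$ from the proof of Theorem \ref{theta}. The only cosmetic difference is in the surjectivity step: you quote the equality $\image\,\widetilde k_{\#}=\image\,\partial$ directly from that proof, whereas the paper re-derives basicness of the operation built from $\theta$ by invoking Corollary \ref{equivs} and observing that $j\widetilde k\simeq j\lambda\mu$ factors through the cone $C\Sigma(\bar M_1\wedge\bar M_2)$ and is hence nullhomotopic.
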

\begin{proof}
For $T \in \,${\bf BO}, we have $T(\iota_1,\iota_2) =\widetilde{k}_{\#}(\theta )$, for $\theta \in \pi _{q_3}(\Sigma (\bar M_1 \wedge \bar M_2);G_3)$. Conversely given $\theta $, we define $T$ by $T(\iota_1,\iota_2) =\widetilde{k}_{\#}(\theta )$. It suffices to prove that $T$ is basic, that is, $j\widetilde{k}\theta \simeq 0$, by Corollary \ref{equivs}. But $j\widetilde{k}
\simeq j \lambda \mu \simeq 0$ since $ j \lambda \mu $ factors through $C\Sigma (\bar M_1 \wedge \bar M_2)$ (see the diagram in the proof of Theorem \ref{theta}).
\end{proof}

\noindent We have seen that if $T$ is basic and $q_3 <q_1+q_2 +\min(q_1,q_2)-3$, then $T$ is bi-additive. The following corollary gives additional properties with this hypothesis.

\begin{corollary} \label{properties} If \,$T \in \,${\bf BO} and $q_3 <q_1+q_2 +\min(q_1,q_2)-3$, then

\begin{enumerate} 
\item $T(\alpha , \beta ) =0$ if $X$ is an H-space;
\item $ET(\alpha , \beta ) =0$, where $E: \pi _{q_3}(X;G_3) \to \pi _{{q_3}+1}(\Sigma X;G_3)$ is the suspension homomorphism;
\item If $q_3 \le q_1 +q_2 -3$, then $T(\alpha , \beta ) = 0$, for all $\alpha $ and $\beta$.
\end{enumerate} 

\end{corollary}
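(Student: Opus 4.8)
The plan is to run all three parts through Theorem~\ref{theta}. In each case the inequality $q_3<q_1+q_2+\min(q_1,q_2)-3$ holds (for (3) automatically, since $\min(q_1,q_2)\ge 3$ forces $q_1+q_2-3<q_1+q_2+\min(q_1,q_2)-3$), so Theorem~\ref{theta} produces $\theta_T=[h]\in\pi_{q_3}(\Sigma(\bar M_1\wedge\bar M_2);G_3)$, i.e.\ a map $h\colon M(G_3,q_3)\to\Sigma(\bar M_1\wedge\bar M_2)$, with $T(\alpha,\beta)=h^{*}[\alpha,\beta]=[\alpha,\beta]\circ h$, where $[\alpha,\beta]\in[\Sigma(\bar M_1\wedge\bar M_2),X]$ is a generalized Whitehead product. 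Thus (1) and (2) will follow from the corresponding vanishing statements for generalized Whitehead products, and (3) from the vanishing of $\theta_T$ alone.

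For (1) I would quote from \cite{a} that a generalized Whitehead product into an H-space is trivial: if $m\colon X\times X\to X$ is an H-structure and $f,g$ represent $\alpha,\beta$, then $m\circ(f\times g)\colon\Sigma\bar M_1\times\Sigma\bar M_2\to X$ restricts on the wedge to the map $(f,g)$; since $\widetilde k$ composed with the inclusion $\Sigma\bar M_1\vee\Sigma\bar M_2\hookrightarrow\Sigma\bar M_1\times\Sigma\bar M_2$ is nullhomotopic (it factors through the cone $C\Sigma(\bar M_1\wedge\bar M_2)$, exactly as in the diagram in the proof of Theorem~\ref{theta}), one gets $[\alpha,\beta]=(f,g)\circ\widetilde k\simeq *$, hence $T(\alpha,\beta)=[\alpha,\beta]\circ h=0$. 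For (2) I would use the other classical property of generalized Whitehead products from \cite{a}: they suspend to zero, $\Sigma[\alpha,\beta]\simeq *$ in $[\Sigma^2(\bar M_1\wedge\bar M_2),\Sigma X]$. Under the identifications $\pi_{q_3}(X;G_3)=[M(G_3,q_3),X]$ and $\pi_{q_3+1}(\Sigma X;G_3)=[\Sigma M(G_3,q_3),\Sigma X]$ the homomorphism $E$ is just the suspension functor on maps, so $ET(\alpha,\beta)=\Sigma\bigl([\alpha,\beta]\circ h\bigr)=(\Sigma[\alpha,\beta])\circ(\Sigma h)\simeq *$; the only bookkeeping point is that $E$ commutes with precomposition, which is functoriality of $\Sigma$.

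For (3) I would show directly that $\theta_T=0$. Since $\bar M_i=M(G_i,q_i-1)$ is $(q_i-2)$-connected, $\bar M_1\wedge\bar M_2$ is $(q_1+q_2-3)$-connected, so $\Sigma(\bar M_1\wedge\bar M_2)$ is $(q_1+q_2-2)$-connected (its bottom cell sits in dimension $q_1+q_2-1$). On the other hand $M(G_3,q_3)$ admits a CW structure of dimension at most $q_3+1$. When $q_3\le q_1+q_2-3$ we get $\dim M(G_3,q_3)\le q_3+1\le q_1+q_2-2$, i.e.\ the dimension of the source is at most the connectivity of the target, so every map $M(G_3,q_3)\to\Sigma(\bar M_1\wedge\bar M_2)$ is nullhomotopic (the obstruction to a nullhomotopy over a $k$-cell lies in $\pi_k(\Sigma(\bar M_1\wedge\bar M_2))=0$ for $k\le q_1+q_2-2$). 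Hence $\theta_T=0$, $h\simeq *$, and $T(\alpha,\beta)=[\alpha,\beta]\circ h=0$ for all $\alpha,\beta$.

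None of the steps is deep. The only care needed is to cite the two structural facts about the generalized Whitehead product from \cite{a} in the precise form used, and, for (3), to count connectivities correctly and to use $\dim M(G_3,q_3)\le q_3+1$, so that the hypothesis $q_3\le q_1+q_2-3$ is exactly what makes the relevant mapping set trivial; this dimension count is the one place where an off-by-one error would break the argument.
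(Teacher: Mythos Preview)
Your proof is correct and follows exactly the paper's approach: (1) and (2) are deduced from Theorem~\ref{theta} together with the vanishing properties of the generalized Whitehead product in \cite{a}, and (3) from the dimension/connectivity count showing $\theta_T$ is nullhomotopic. You have simply supplied more of the details than the paper's terse version.
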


\begin{proof} The generalized Whitehead product satisfies the first two properties (see \cite{a}, p.\,13), and so (1) and (2) follow from
Theorem \ref{theta}. 
Property (3) follows since the dimension of $M(G_3,q_3)$ is $\le q_3 +1$ and $\Sigma (\bar M_1\wedge \bar M_2) $ is $(q_1 + q_2 -2)$-connected, and so $\theta _T $ is nullhomotopic.
\end{proof}
\noindent The inequality in property (3) of Corollary \ref{properties} cannot be improved. That is, there are non-trivial operations
with $q_3 =q_1 + q_2 -2$. To see this, let $q= q_1 +q_2 $, let $G_1 = G_2 =\mathbb{Z}$ and $G_3 =\mathbb{Z}_k$, for some integer $k>1$ (so that $M_1 = S^{q_1}$ and $M_2 = S^{q_2}$). Set $(Y,X) = (S^{q_1}\times S^{q_2}, S^{q_1}\vee S^{q_2})$ and consider
the homomorphisms
$$\xymatrix{\mathrm{Ext}(\mathbb{Z}_k,\pi_q(Y,X) )\ar[r]^{\lambda } & \pi_{q-1}(Y,X;
\mathbb{Z}_k)\ar[r]^-{\partial}  
& \pi_{q-2}(X;\mathbb{Z}_k), }$$
where $\lambda $ is the monomorphism of the Universal Coefficient Theorem and $\partial $ is the boundary homomorphism. Furthermore, $\partial $ is a monomorphism. This follows from the exact homotopy sequence
with coefficients $\mathbb{Z}_k$ of $(S^{q_1}\times S^{q_2},S^{q_1}\vee S^{q_2})$ and the fact that 
$j_{\#} : \pi _i(S^{q_1}\vee S^{q_2}) \to \pi _i(S^{q_1}\times S^{q_2})$ has a right inverse.
In addition, $\pi_q(Y,X)$ is isomorphic to $\mathbb{Z}$ and so the Ext term
is $\mathrm{Ext}(\mathbb{Z}_k, \mathbb{Z}) =\mathbb{Z}_k$. Thus the monomorphism $\partial \lambda $ maps these $k$ elements
into $\pi_{q-2}(M_1\vee M_2;\mathbb{Z}_k)$ and hence determines $k$ basic homotopy operations. Since $q-2 <q_1+q_2 +\min(q_1,q_2)-3$,
all of these operations are bi-additive and have the properties listed in Corollary \ref{properties}. We shall refer to these operations of type $\{ \mathbb{Z}, \mathbb{Z}, \mathbb{Z}_k ; q_1, q_2, q_1 +q_2 -2  \}$ as {\bf Ext }operations. For more about them, see \S 6.

\section{Whitehead and Torsion products}

\noindent We next define a class of basic operations. The purpose is to unify Hilton's treatment of Whitehead products and Torsion products
in (\cite{h}, pp.\,110-120). Let $T$ be a basic operation
of type $\{ G_1, G_2, G_3 ; q_1, q_2, q_3  \}$
and let $\omega \in \pi _{q_3}(M_1\vee M_2;G_3)$ be the universal element for 
$T$. Furthermore, let $(Y,X) = (M_1\times M_2, M_1\vee M_2)$ and $Z=M_1\wedge M_2$.
We give three conditions:
\begin{enumerate} 

\item $H_{q_3 +1}(Z) \approx  G_3$, and we let $\phi : G_3 \to H_{q_3+1}(Z)=H_{q_3 +1}(M_1 \wedge M_2) $ be the isomorphism of the K\"unneth Theorem (see Remark 5.2 (1)).

\item $T$ is basic, and so there is a unique $\xi \in \pi _{q_3+1}(Y,X;G_3)$ such that $\partial (\xi )= \omega $,
\item There is a homomorphism $\widehat{\eta}$ which is defined by the following diagram
$$
\xymatrix{\pi _{q_3+1}(Y,X;G_3) \ar[r]^-{p_{\#} } 
 \ar@/_1pc/[rr]_{\widehat{\eta }} &   \pi _{q_3+1}(Z;G_3) \ar[r]^-{\eta }
  & \mathrm{Hom} (G_3, \pi _{q_3+1}(Z)), }
$$
where $p:Y =M_1\times M_2\to Z=M_1\wedge M_2$ is the  projection and $\eta $ is the epimorphism in the Universal Coefficient Theorem. 
Then the third condition is that the
following composition is equal to the identity map
$$\xymatrix{G_3 \ar[r]^-{\widehat{\eta }(\xi )} & \pi _{q_3+1}(M_1\wedge M_2)
\ar[r]^-h & H _{q_3+1}(M_1\wedge M_2) \ar[r]^-{\phi ^{-1}} & G_3,}$$ where $h$ is the Hurewicz homomorphism.

\end{enumerate}

\begin{definition}\label{special} Any operation which satisfies these three conditions will be called a {\bf special
operation}. 
\end{definition}

\noindent \begin{rmk} \label{specrmks} 
\begin{enumerate}
{\em \item  From the K\"unneth Theorem we have that there are two possibilities for a special operation:
\newline (a) $q_3 =q_1 + q_2 -1$ and $G_3= G_1\otimes G_2$. In this case assume
$q_1,q_2 \ge 3$. 
\newline (b) $q_3 =q_1 + q_2 $ and $G_3= G_1* G_2 = 
\mathrm{Tor}(G_1,G_2)$. In this case assume
$q_1,q_2 \ge 4$.
\item We comment on the third condition. The set of special operations of a given type may be empty since the third condition may not be satisfied (assuming that the first two are). Since $q_3 <q_1+q_2 +\min(q_1,q_2)-3$, $p_{\#} $ is an epimorphism and so $\widehat{\eta } $ is an epimorphism. If, in addition, $h$ is an isomorphism, then the set $\widehat{\eta }^{-1}(h^{-1}\phi)$ is non-empty and so the set of special operations equals $\partial \widehat{\eta }^{-1}(h^{-1}\phi ).$  In the general case ($h$ not
necessarily an isomorphism), $\partial (\xi )= \omega $ and $\widehat{\eta}(\xi ) = \eta (p_{\#}\xi ) = \eta (p\xi)$. There is a commutative diagram
$$
\xymatrix{\pi _{q_3 +1}(M(G_3,q_3 +1))\ar[d]^{h'}\ar[r]^{(p\xi)_{\# }} &\pi _{q_3 +1}(M_1\wedge M_2)\ar[d]^h
\\ G_3= H _{q_3 +1}(M(G_3,q_3 +1))\ar[r]^-{(p\xi )_*} & H_{q_3 +1}(M_1\wedge M_2),}
$$
and $\eta (p\xi ) =(p\xi)_{\#} h'^{-1}$. Therefore $$ \phi = h\widehat{\eta }(\xi ) = h\eta (p\xi ) =h (p\xi )_{\#}h'^{-1} =  (p\xi )_*  h' h'^{-1}=  (p\xi )_* .$$
Thus the third condition is $$ p_*\xi _* =(p\xi )_*= \phi  :G_3= H _{q_3 +1}(M(G_3,q_3 +1))\to H_{q_3 +1}(M_1\wedge M_2).$$}
\end{enumerate}
\end{rmk}

\noindent In \cite{h}, Hilton defined two classes of binary homotopy operations, the Whitehead products and the Torsion products. 
We give slightly different definitions which are equivalent to Hilton's definition. One difference is that we apply the Universal Coefficient Theorem
to $M_1\wedge M_2$ instead of to the pair $(M_1\times M_2, M_1 \vee M_2)$. In the relevant degrees the homotopy groups of the
two are isomorphic. A second difference concerns the existence of the isomorphism $\phi $. In 
(\cite{h}, pp.\,110, 115), $H_{q_3 +1}(M_1 \wedge M_2)$ is identified with $G_3$, whereas we make the isomorphism  explicit.
\medskip

\noindent Whitehead products (with coefficients) may now be defined as special operations $T$ satisfying 1(a) in Remark \ref{specrmks} with $G_3 = G_1 \otimes G_2$ and $q_3  =q_1+ q_2 -1$. They are called Whitehead products of type $\{G_1, G_2; q_1, q_2 \}$. It is clear that $\widehat{\eta} :\pi _{q_3 +1}(M_1\times M_2, M_1\vee M_2;G_3) \to \mathrm{Hom} (G_3, \pi _{q_3+1}(M_1\wedge M_2))$ is onto and $h: \pi _{q_3+1}(M_1\wedge  M_2)\to 
H _{q_3+1}(M_1\wedge M_2) $ is an isomorphism by the Hurewicz Theorem. Therefore if $\omega $ is an element of the non-empty
set $\partial \widehat{\eta}^{-1}(h^{-1}\phi )$, then by definition $\omega $ is the universal element of a
{\bf Whitehead product} $T$. There can be several different Whitehead products, in fact, since $\partial $ is one-one, the number of
Whitehead products is just the cardinality of the set $\Ker \widehat{\eta } = \Ker \eta =  \mathrm{Ext} (G_3, \pi _{q_3+2}(M_1\wedge M_2))$ by the Universal Coefficient Theorem. Moreover, if $T $ is a Whitehead product, then there exists a unique $\theta _T\in \pi _{q_3}(\Sigma (\bar M_1 \wedge \bar M_2);G)$
such that $T(\alpha ,\beta) = [\alpha , \beta] \, \theta _T $ by Theorem 
\ref{theta}. Thus each Whitehead product $ T$ satisfies bi-additivity and the properties listed in Corollary \ref{properties}.

\medskip

\noindent Torsion products are defined as special operations $T$ which satisfy 1(b) in Remark \ref{specrmks}, with $G_3=G_1*\,G_2= \mathrm{Tor} (G_1,G_2),$ $q_3=q_1 + q_2,$ and, in addition, $q_1,q_2 \ge 4$. They will be called Torsion products of type $\{G_1, G_2; q_1, q_2 \}.$
A {\bf Torsion product} 
is determined by an element $\tau \in \pi _{q_3}(M_1\vee M_2;G_3)$ such that $\tau = \partial (\zeta )$, for some $\zeta \in \pi _{q_3+1}(M_1\times M_2, M_1\vee M_2;G_3)$,
and such that $\phi ^{-1}h\widehat {\eta  } (\zeta )$ 
is the identity homomorphism of $G_3$ (see \cite{h}, p.\,115).
Note that the set of Torsion products may be empty, though not if $h$ is an 
isomorphism. In this case the number of Torsion products equals the cardinality of
$\Ker \widehat{\eta } = \Ker \eta =  \mathrm{Ext} (G_3, \pi _{q_3+2}(M_1\wedge M_2))$.
Furthermore, the hypotheses of
Theorem \ref{theta} are satisfied, and so there exists a unique $\theta_T \in \pi _{q_3}(\Sigma (\bar M_1 \wedge \bar M_2);G_3)$
such that $T(\alpha ,\beta) = [\alpha , \beta] \, \theta _T .$ As in the previous case, each Torsion product $T$ is bi-additive and satisfies the properties listed in Corollary \ref{properties}.
\medskip

\noindent  Many of these
properties for Whitehead and Torsion products have been proved in (\cite{h}, pp.\,111--113 and 116--119) directly from the definitions, though some of
our results (such as bi-additivity) are more general and the proofs are shorter (see \cite{h}, Theorems 12.3 and 12.6). 
\medskip

\noindent Since  $\pi _i(X; G \oplus G') \approx \pi _i(X; G )\oplus \pi _i(X;  G')$,
for any $i\ge 2$ and groups $G$ and $G',$ and since $G_1 *\,G_2 = 0$ if $G_1$ or $G_2$ is free-abelian, for the Torsion product we may restrict attention to the case when $G_1 =\mathbb{Z}_m$ and $G_2 =\mathbb{Z}_n$ are cyclic groups. Then $G_1*G_2 = \mathbb{Z}_d$, where $d$ is the greatest common divisor of $m$ and $n$. 
In (\cite{h}, pp.\,115--116) the following was proved: A Torsion product of type $\{\mathbb{Z}_m, \mathbb{Z}_n ; q_1, q_2  \}$
exists if and only if (1) $d$ is odd or (2) $m$ and $n$ are even and either $m $ or $n$ is a multiple of 4. 
In particular, a Torsion product exists if $m = n= p^k$, where $p$ is an odd prime and $k\ge 1$.
\medskip

\noindent
Another approach to Whitehead and Torsion products is suggested by Theorem \ref{theta}. In that theorem it is proved that many basic operations $T$  can be written as $T(\alpha , \beta ) = [\alpha , \beta]\, \theta _T$, for a unique
$\theta _T \in \pi _{q_3}(\Sigma (\bar M_1\wedge \bar M_2);G_3).$ 
This suggests that we define an operation by $T(\alpha , \beta ) = [\alpha , \beta]\, \theta 
$, for some $\theta \in \pi _{q_3}(\Sigma (\bar M_1\wedge \bar M_2);G_3)$.
\begin{prop}\label{specequiv} Let the operation $T$ of type $\{G_1, G_2, G_3 ; q_1, q_2, q_3 \}$ be defined by $T(\alpha , \beta ) = [\alpha , \beta]\, \theta $
and let $\phi : G_3 \to H_{q_3 +1}(M_1 \wedge M_2)
= H_{q_3 +1}(\Sigma \bar M_1 \wedge \Sigma \bar M_2)$ be the K\"unneth isomorphism.
Then $T$ is a special operation if and only if 
$(\Sigma \theta )_* =\sigma ^{-1}_* \phi , $ where $\sigma : \Sigma ^2(\bar M_1\wedge \bar M_2)\to \Sigma \bar M_1 \wedge \Sigma \bar M_2$ is defined in Theorem \ref{3.1}
\end{prop}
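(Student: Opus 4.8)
The plan is to show that the first two conditions of Definition \ref{special} hold automatically for an operation of the form $T(\alpha,\beta)=[\alpha,\beta]\,\theta$, and then to translate the third condition into the homological equality $(\Sigma\theta)_*=\sigma_*^{-1}\phi$ using the commutative square in the proof of Theorem \ref{theta}. Note first that the existence of the K\"unneth isomorphism $\phi$ forces $(q_3,G_3)$ to be as in one of the two cases of Remark \ref{specrmks}(1); in either case $q_3<q_1+q_2+\min(q_1,q_2)-3$, so the Blakers--Massey conclusions of the proof of Theorem \ref{theta} are available.

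First I would check conditions (1) and (2) of Definition \ref{special}. Condition (1) is the K\"unneth Theorem in degree $q_3+1$, valid in both cases of Remark \ref{specrmks}(1). For condition (2), write $\omega=T(\iota_1,\iota_2)=[\iota_1,\iota_2]\,\theta=\widetilde{k}_{\#}(\theta)$, where $\widetilde{k}$ represents the universal element $[\iota_1,\iota_2]$. Then $j_{\#}\omega=(j\widetilde{k})_{\#}(\theta)=0$, because $j\widetilde{k}\simeq j\lambda\mu\simeq 0$ (the left square of the diagram in the proof of Theorem \ref{theta} exhibits $j\lambda\mu$ as factoring through the contractible cone $C\Sigma(\bar M_1\wedge\bar M_2)$), so $T$ is basic by Corollary \ref{equivs}. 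Also $\partial:\pi_{q_3+1}(M_1\times M_2,M_1\vee M_2;G_3)\to\pi_{q_3}(M_1\vee M_2;G_3)$ is a monomorphism, since $j_{\#}:\pi_i(M_1\vee M_2;G_3)\to\pi_i(M_1\times M_2;G_3)$ has a right inverse, so the element $\xi$ with $\partial\xi=\omega$ is unique. Hence, by the reformulation in Remark \ref{specrmks}(2), it remains to prove that the third condition, namely $(p\xi)_*=\phi$ as maps $G_3=H_{q_3+1}(M(G_3,q_3+1))\to H_{q_3+1}(M_1\wedge M_2)$ (where $p\xi$ denotes the map $M(G_3,q_3+1)\to M_1\wedge M_2$ through which $p\xi$ factors), is equivalent to $(\Sigma\theta)_*=\sigma_*^{-1}\phi$.

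The crucial step is to identify $\xi$ in terms of $\theta$. Let $\delta'$ be the connecting map of the top cofiber sequence in the diagram of the proof of Theorem \ref{theta}. Taking a representative $h$ of $\theta$ and its cone map $Ch:CM(G_3,q_3)\to C\Sigma(\bar M_1\wedge\bar M_2)$, one sees that $p'_{\#}[Ch]=\Sigma\theta$ and $\partial[Ch]=\theta$, so $\delta'(\Sigma\theta)=\theta$; since $\delta'$ is an isomorphism, $\Sigma\theta$ is the unique class it carries to $\theta$. Applying $\widetilde{k}_{\#}$ and using the commutative square $\widetilde{k}_{\#}\,\delta'=\delta\,\sigma_{\#}$ with $\delta=\partial\,p_{\#}^{-1}$ from that proof, we get
$$\omega=\widetilde{k}_{\#}(\theta)=\widetilde{k}_{\#}\,\delta'(\Sigma\theta)=\delta\,\sigma_{\#}(\Sigma\theta)=\partial(p_{\#}^{-1}\sigma_{\#}(\Sigma\theta)),$$
and injectivity of $\partial$ forces $\xi=p_{\#}^{-1}\sigma_{\#}(\Sigma\theta)$, i.e. $p_{\#}\xi=\sigma_{\#}(\Sigma\theta)$ in $\pi_{q_3+1}(M_1\wedge M_2;G_3)$.

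It then remains to pass to homology in degree $q_3+1$. The class $p_{\#}\xi$ is represented by the factorization of $p\xi$ through $M(G_3,q_3+1)$, and $\sigma_{\#}(\Sigma\theta)$ by $\sigma\circ\Sigma h$; hence $(p\xi)_*=\sigma_*(\Sigma\theta)_*$, and because $\sigma$ is a homeomorphism this shows $(p\xi)_*=\phi$ if and only if $(\Sigma\theta)_*=\sigma_*^{-1}\phi$. Combined with the automatic validity of (1) and (2), this proves the proposition. I expect the main obstacle to be the verification that $\delta'(\Sigma\theta)=\theta$, together with the careful bookkeeping needed to apply the square of Theorem \ref{theta}, after making the identifications $M_i=\Sigma\bar M_i$, $\pi_{q_3+1}(M_1\times M_2,M_1\vee M_2;G_3)\approx\pi_{q_3+1}(M_1\wedge M_2;G_3)$, and $C\Sigma(\bar M_1\wedge\bar M_2)/\Sigma(\bar M_1\wedge\bar M_2)=\Sigma^2(\bar M_1\wedge\bar M_2)$; once $p_{\#}\xi=\sigma_{\#}(\Sigma\theta)$ is established, the rest is routine diagram chasing.
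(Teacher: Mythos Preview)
Your proposal is correct and follows essentially the same route as the paper. The paper draws a single three-row homotopy-commutative diagram (built from Theorem \ref{3.1}) with $\theta$, $C\theta$, $\Sigma\theta$ down the left column and $\widetilde{k}$, $\Lambda'=\Lambda(C\mu)$, $\sigma$ down the right, then simply \emph{defines} $\xi=\Lambda'(C\theta)$ and reads off $\partial\xi=\omega$ and $p\xi=\sigma(\Sigma\theta)$ directly from the rows; you instead recover the same $\xi$ by chasing the square $\widetilde{k}_{\#}\delta'=\delta\,\sigma_{\#}$ from Theorem \ref{theta} together with injectivity of $\partial$, which amounts to the same computation once you unpack $\delta'(\Sigma\theta)=\theta$ via $[Ch]$.
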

\begin{proof} First we show that $T$ is basic. Let $\omega $ be the universal element of $T$
and consider the homotopy-commutative diagram obtained from Theorem \ref{3.1} (see also 
the diagram in the proof of Theorem \ref{theta})
$$\xymatrix{M(G_3,q_3) \ar[r]^{\theta }\ar[d] & \Sigma(\bar M_1\wedge \bar M_2) \ar[r]^{\widetilde{k}} \ar[d] & \Sigma \bar M_1 \vee \Sigma \bar M_2 \ar[d]^j
\\ CM(G_3,q_3) \ar[r]^{C\theta } \ar[d] & C\Sigma(\bar M_1\wedge \bar M_2) \ar[r]^-{\Lambda ' } \ar[d]&  \Sigma \bar M_1 \times \Sigma \bar M_2 \ar[d]^p
\\ \Sigma M(G_3,q_3) \ar[r]^{\Sigma \theta } & \Sigma ^2(\bar M_1\wedge \bar M_2) \ar[r]^{\sigma } & \Sigma \bar M_1 \wedge \Sigma \bar M_2,
}$$
where $\Lambda ' = \Lambda (C\mu )$. Then $j\omega = j\widetilde{k}\theta \simeq 0$
and so $T$ is basic.
If $\xi =  \Lambda '(C\theta ) \in \pi _{q_3+1}(\Sigma \bar M_1 \times \Sigma \bar M_2, \Sigma \bar M_1 \vee \Sigma \bar M_2 ;G_3)$, then $\partial (\xi )= \omega $. Also $p(\xi )= \sigma (\Sigma \theta ),$ and so the third condition is 
$$\sigma _*(\Sigma \theta )_* =p_*\xi _* =  \phi , $$ which is equivalent to $(\Sigma \theta )_* =\sigma ^{-1}_* \phi . $
This completes the proof.
\end{proof}

\noindent Note that if $T,$ given by the universal element $\widetilde{k}\theta $, is a special operation, then $\theta  _*: H_{q_3}(M(G_3,q_3))  \to H_{q_3}(\Sigma (\bar M_1\wedge \bar M_2))$ is an isomorphism.
\medskip

\noindent By Remark \ref{specrmks}, there are only two possibilities for special operations. The following result is then a consequence
of Proposition \ref{specequiv}.
\begin{corollary} \label{two} Let $T$ be an operation of type $\{G_1, G_2, G_3 ; q_1, q_2, q_3 \}$ with universal element $\widetilde{k}\theta $
for some $\theta \in \pi _{q_3}(\Sigma (\bar M_1\wedge \bar M_2);G_3).$
\begin{enumerate} 
\item Let $q_3=q_1 +q_2-1 $ and $ G_3 = G_1 \otimes G_2$. Then $T$ is a Whitehead product if and only if $(\Sigma \theta )_* = \sigma _*^{-1} \phi .$
\item Let $q_3=q_1 +q_2 $ and $ G_3 = G_1 * G_2$. Then $T$ is a Torsion product if and only if $(\Sigma \theta )_* = \sigma _*^{-1} \phi .$
\end{enumerate}
\end{corollary}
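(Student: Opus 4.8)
The statement to prove is Corollary~\ref{two}, which asserts that for an operation $T$ of type $\{G_1,G_2,G_3;q_1,q_2,q_3\}$ with universal element $\widetilde{k}\theta$, in each of the two admissible special-operation regimes --- (1) $q_3 = q_1+q_2-1$ with $G_3 = G_1\otimes G_2$, and (2) $q_3 = q_1+q_2$ with $G_3 = G_1\ast G_2$ --- the operation $T$ is respectively a Whitehead product or a Torsion product if and only if $(\Sigma\theta)_* = \sigma_*^{-1}\phi$.

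The plan is to derive this almost immediately from Proposition~\ref{specequiv} together with Remark~\ref{specrmks}(1). Proposition~\ref{specequiv} already states that an operation $T$ defined by $T(\alpha,\beta) = [\alpha,\beta]\,\theta$ is a special operation if and only if $(\Sigma\theta)_* = \sigma_*^{-1}\phi$, where $\phi\colon G_3 \to H_{q_3+1}(M_1\wedge M_2)$ is the K\"unneth isomorphism. So the first step is to observe that having universal element $\widetilde{k}\theta$ is exactly the hypothesis of Proposition~\ref{specequiv}: indeed $T(\iota_1,\iota_2) = \widetilde{k}_{\#}(\theta) = [\iota_1,\iota_2]\,\theta$ by the definition of the generalized Whitehead product map and the naturality identity $[\alpha,\beta] = (f,g)_{\#}[\iota_1,\iota_2]$, so $T$ is precisely the operation $T(\alpha,\beta) = [\alpha,\beta]\,\theta$. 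The second step is to invoke Remark~\ref{specrmks}(1): the K\"unneth Theorem forces any special operation into exactly one of the two cases (a) $q_3 = q_1+q_2-1$, $G_3 = G_1\otimes G_2$, or (b) $q_3 = q_1+q_2$, $G_3 = G_1\ast G_2$, and by the definitions given just before Corollary~\ref{two}, a special operation in case (a) is what is called a Whitehead product and a special operation in case (b) is what is called a Torsion product. Combining these: in case (1), $T$ is a Whitehead product iff $T$ is a special operation iff $(\Sigma\theta)_* = \sigma_*^{-1}\phi$; and in case (2), $T$ is a Torsion product iff $T$ is a special operation iff $(\Sigma\theta)_* = \sigma_*^{-1}\phi$.

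One routine point I would make explicit is that the dimension/connectivity constraints needed for Proposition~\ref{specequiv} (in particular $q_3 < q_1+q_2+\min(q_1,q_2)-3$, used in the background to guarantee $p_{\#}$ is epic and $\widehat{\eta}$ is well-defined) are automatically met in both cases (1) and (2): in case (1), $q_3 = q_1+q_2-1 < q_1+q_2+\min(q_1,q_2)-3$ provided $\min(q_1,q_2) \ge 3$, which is the standing assumption $q_1,q_2\ge 3$ of case (a); in case (2), $q_3 = q_1+q_2 < q_1+q_2+\min(q_1,q_2)-3$ provided $\min(q_1,q_2)\ge 4$, which is the standing assumption of case (b). Hence Proposition~\ref{specequiv} applies verbatim in each case, and no new analysis is required.

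I do not expect a genuine obstacle here; the corollary is essentially a bookkeeping consequence of Proposition~\ref{specequiv} once one notes that the two numerical/algebraic profiles in its hypotheses are exactly the two cases catalogued in Remark~\ref{specrmks}(1), and that ``special operation'' in those two cases has been renamed ``Whitehead product'' and ``Torsion product'' respectively. The only thing requiring a sentence of care is confirming that the K\"unneth isomorphism $\phi$ appearing in the statement of Corollary~\ref{two} is the same $\phi$ as in Proposition~\ref{specequiv} --- it is, since both refer to the canonical K\"unneth isomorphism $G_3 \xrightarrow{\approx} H_{q_3+1}(M_1\wedge M_2)$, and $H_{q_3+1}(M_1\wedge M_2) = H_{q_3+1}(\Sigma\bar M_1 \wedge \Sigma\bar M_2)$ since $M_i = \Sigma\bar M_i$. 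So the proof is just: apply Proposition~\ref{specequiv}, then read off the names from Remark~\ref{specrmks} and the definitions preceding the corollary.
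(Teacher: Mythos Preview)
Your proposal is correct and follows exactly the paper's own route: the paper states only that, by Remark~\ref{specrmks}, there are just the two possibilities for special operations, and that the corollary is then a consequence of Proposition~\ref{specequiv}. Your additional checks (that the standing assumptions $q_1,q_2\ge 3$ resp.\ $q_1,q_2\ge 4$ secure the needed inequality, and that the K\"unneth $\phi$ is the same in both statements) are sound elaborations of points the paper leaves implicit.
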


\begin{rmk} {\em For Whitehead products ($q=q_1 +q_2 =q_3 +1 $ and $ G_3 = G_1 \otimes G_2$), we claim that this corollary can identify those $\theta \in \pi _{q-1}(\Sigma (\bar M_1\wedge \bar M_2);G_3)$ such that $\widetilde{k}\theta $ are
all the Whitehead universal elements. If $\omega \in \pi_{q-1}(M_1\vee M_2;G_3)$ is a universal element, then $\omega = \partial (\xi )$ for $\xi \in \pi_{q}(M_1\times M_2, M_1\vee M_2;G_3)$. Thus $p\xi  \in \pi_{q}(M_1\wedge M_2;G_3)$ and $\sigma ^{-1}(p\xi ) \in \pi _q (\Sigma ^2 (\bar M_1\wedge \bar M_2);G_3).$
Because the suspension homomorphism $E: \pi _{q-1} (\Sigma  (\bar M_1\wedge \bar M_2);G_3)\to \pi _q (\Sigma ^2 (\bar M_1\wedge \bar M_2);G_3)$ is an isomorphism, there exists a unique $\theta \in \pi _{q-1} (\Sigma  (\bar M_1\wedge \bar M_2);G_3)$ such that $\Sigma \theta = \sigma ^{-1}(p\xi )$.
We will show that $\widetilde{k} \theta = \omega .$ From the diagram in the proof of Proposition \ref{specequiv} we see that 
$\partial (\Lambda ' (C\theta )) = \widetilde{k}\theta $ and furthermore, with $p_{\#}: \pi _q(M_1\times M_2, M_1 \vee M_2;G_3)\to
\pi _q(M_1\wedge M_2;G_3)$, 
$$p_{\#}(\Lambda ' (C\theta ))= p(\Lambda ' (C\theta ))= \sigma (\Sigma \theta ) =\sigma \sigma^{-1}(p\xi ) =p_{\#}(\xi ).$$
Since $p_{\#}$ is an isomorphism, $\Lambda ' (C\theta ) =\xi ,$ and so
$$ \widetilde{k}\theta = \partial (\Lambda ' (C\theta ))= \partial (\xi )=\omega.$$ This establishes the claim.

\noindent A similar remark holds for the Torsion product.}
\end{rmk}

\noindent We next consider commutativity of special operations (see also \cite{h}, pp.\,113 -114 and 117-118).
\smallskip

\noindent Let $T$ be a special operation of type $\{G_1, G_2 ,G_3; q_1, q_2,q_3 \}$. Then $G_3 =G_1 \otimes G_2$ or $G_3 =G_1*G_2$ and we set 
$G'_3 =G_2 \otimes G_1$ or $G'_3 =G_2 * G_1$, accordingly. Furthermore, let $ t: G'_3 \to G_3$ be the switching isomorphism ($G_2\otimes G_1 \to G_1\otimes G_2$ or $G_2 * G_1 \to G_1* G_2$). Then there is a map $\tau : M(G'_3,q_3) \to M(G_3,q_3)$ such that $\tau _* =t.$  
\begin{prop} With $T$ a special operation as above, we define an operation $S$ by
 $$ S(\beta ,\alpha ) = (-1)^{\varepsilon } T(\alpha , \beta )\tau ,$$
for $\alpha \in \pi _{q_1}(W;G_1)$ and $\beta \in \pi _{q_2}(W;G_2)$ for any space $W,$ where $\varepsilon = q_1q_2$ when $G_3 =G_1 \otimes G_2$ and $\varepsilon = q_1q_2+1$ when $G_3 =G_1*G_2$. 
Then $S$ is a special operation of type $\{G_2, G_1 ,G_3'; q_2, q_1,q_3 \}$.
\end{prop}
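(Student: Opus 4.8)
The plan is to reduce $S$ to the generalized Whitehead product by Theorem~\ref{theta} and then to apply the homological criterion of Corollary~\ref{two}. First, $S$ is an operation of type $\{G_2,G_1,G_3';q_2,q_1,q_3\}$: for a map $f\colon W\to W'$,
$$f_{\#}S_W(\beta,\alpha)=(-1)^{\varepsilon}\bigl(f_{\#}T_W(\alpha,\beta)\bigr)\tau=(-1)^{\varepsilon}T_{W'}(f_{\#}\alpha,f_{\#}\beta)\tau=S_{W'}(f_{\#}\beta,f_{\#}\alpha),$$
since precomposition with the fixed map $\tau$ and multiplication by the fixed sign commute with $f_{\#}$.

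Since $T$ is special, Remark~\ref{specrmks}(1) puts us in case~(a) ($q_1,q_2\ge 3$, $q_3=q_1+q_2-1$, $G_3=G_1\otimes G_2$) or case~(b) ($q_1,q_2\ge 4$, $q_3=q_1+q_2$, $G_3=G_1*G_2$); in either case $q_1,q_2\ge 3$ and $q_3<q_1+q_2+\min(q_1,q_2)-3$. As $T$ is basic (one of the three conditions defining a special operation), Theorem~\ref{theta} gives a unique $\theta\in\pi_{q_3}(\Sigma(\bar M_1\wedge\bar M_2);G_3)$ with $T(\alpha,\beta)=[\alpha,\beta]\,\theta$, and, $T$ being special, Proposition~\ref{specequiv} gives $(\Sigma\theta)_*=\sigma_*^{-1}\phi$. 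Let $\bar\tau\colon\bar M_1\wedge\bar M_2\to\bar M_2\wedge\bar M_1$ be the interchange. By the anti-commutativity of the generalized Whitehead product \cite{a}, $[\alpha,\beta]=c\,[\beta,\alpha]\circ\Sigma\bar\tau$ for a suitable sign $c$, so
$$S(\beta,\alpha)=(-1)^{\varepsilon}[\alpha,\beta]\,\theta\tau=(-1)^{\varepsilon}c\,[\beta,\alpha]\circ\bigl(\Sigma\bar\tau\circ\theta\circ\tau\bigr)=[\beta,\alpha]\,\theta',$$
with $\theta'=(-1)^{\varepsilon}c\,\Sigma\bar\tau\circ\theta\circ\tau\in\pi_{q_3}(\Sigma(\bar M_2\wedge\bar M_1);G_3')$. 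Thus $S$ has the form treated in Proposition~\ref{specequiv} (with $\bar M_1,\bar M_2$ interchanged), so the opening of that proof shows $S$ is basic, with universal element $\widetilde{k}'\theta'$.

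By Corollary~\ref{two} it now remains to verify $(\Sigma\theta')_*=(\sigma')_*^{-1}\phi'$, where $\sigma'\colon\Sigma^2(\bar M_2\wedge\bar M_1)\to\Sigma\bar M_2\wedge\Sigma\bar M_1$ is the analogue of $\sigma$ and $\phi'\colon G_3'\to H_{q_3+1}(M_2\wedge M_1)$ is the K\"unneth isomorphism. Applying $\Sigma$ and then $H_{q_3+1}(-)$ to the formula for $\theta'$, and using $(\Sigma\tau)_*=t$ together with $(\Sigma\theta)_*=\sigma_*^{-1}\phi$, this reduces to
$$(\Sigma\theta')_*=(-1)^{\varepsilon}c\,(\Sigma^2\bar\tau)_*\,\sigma_*^{-1}\,\phi\, t .$$

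The heart of the matter is the compatibility of the K\"unneth isomorphisms with the interchange: after double desuspension $(\Sigma^2\bar\tau)_*$ is the switch $H_{q_3-1}(\bar M_1\wedge\bar M_2)\to H_{q_3-1}(\bar M_2\wedge\bar M_1)$, which via $\phi,\phi'$ and the homeomorphisms $\sigma,\sigma'$ is the algebraic switch $t^{-1}$ twisted by a Koszul sign $\kappa$ — equal to $(-1)^{(q_1-1)(q_2-1)}$ on the $G_1\otimes G_2$ summand (case~(a)) and to its negative on the $\mathrm{Tor}(G_1,G_2)$ summand (case~(b)). Substituting, the displayed identity becomes $(\Sigma\theta')_*=(-1)^{\varepsilon}c\,\kappa\,(\sigma')_*^{-1}\phi'$, and the proof finishes with the numerical check $(-1)^{\varepsilon}c\,\kappa=1$: in case~(a) this uses $\varepsilon=q_1q_2$, and in case~(b) the extra $-1$ in $\kappa$ is offset by the extra $+1$ in $\varepsilon=q_1q_2+1$. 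I expect the main obstacle to be exactly this sign bookkeeping — reconciling the sign $c$ of the anti-commutativity in \cite{a}, the transposition sign of the K\"unneth theorem (with its extra sign on the $\mathrm{Tor}$ summand), and the chosen $\varepsilon$, while tracking carefully the homeomorphisms $\sigma,\sigma',\mu,\mu'$ and which suspension coordinate is which. If Corollary~\ref{two} proves awkward, an equivalent route is to verify directly the three conditions defining a special operation for $S$, which by Remark~\ref{specrmks}(2) leads to the same computation.
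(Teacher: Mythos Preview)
Your route is sound but genuinely different from the paper's. The paper never passes through Theorem~\ref{theta}, Proposition~\ref{specequiv}, or the anti-commutativity of the generalized Whitehead product. Instead it verifies the three defining conditions for $S$ directly at the level of relative homotopy groups: with $(Y,X)=(M_1\times M_2,M_1\vee M_2)$, $(Y',X')=(M_2\times M_1,M_2\vee M_1)$ and the switching map $\rho\colon(Y,X)\to(Y',X')$, it sets $\xi'=\rho_{\#}\tau^{*}(\xi)$, so that $\partial'\xi'=\rho'\omega\tau$, and then checks $h'_*\widehat{\eta'}(\xi')=(-1)^{\varepsilon}\phi'$ by a single commutative square expressing the K\"unneth/switch compatibility $\rho''_*\phi=(-1)^{\varepsilon}\phi' t^{-1}$, for which it simply cites \cite{h}, pp.\,114 and 118. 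The sign $(-1)^{\varepsilon}$ is then absorbed into the universal element, and the formula $S(\beta,\alpha)=(-1)^{\varepsilon}T(\alpha,\beta)\tau$ drops out from $\rho'\omega\tau=\omega'$. In other words, the paper packages \emph{all} of the sign bookkeeping into one citation about $\rho''_*$ and $\phi$.

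Your approach, by contrast, splits the sign problem in two: the anti-commutativity sign $c$ from \cite{a} and the K\"unneth transposition sign $\kappa$ must be computed separately and then shown to combine with $(-1)^{\varepsilon}$ to give $+1$. This is doable, but you have not actually done it --- you leave $c$ unspecified (``a suitable sign''), describe $\kappa$ only informally, and explicitly flag the reconciliation as the expected obstacle. There is also a hidden subtlety you allude to but do not resolve: $\sigma'\circ\Sigma^2\bar\tau\circ\sigma^{-1}$ is \emph{not} equal to the switch $\rho''\colon M_1\wedge M_2\to M_2\wedge M_1$ on the nose (it fixes the two suspension coordinates rather than swapping them), so relating $(\Sigma^2\bar\tau)_*\sigma_*^{-1}$ to $(\sigma')_*^{-1}\rho''_*$ introduces yet another sign. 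What each approach buys: the paper's is shorter and isolates the sign in a single place already handled in the literature; yours makes the dependence on the generalized Whitehead product explicit and would, once the signs are pinned down, give a proof that stays within the $\theta$-picture developed in \S\S4--5. If you pursue your route, I would recommend writing down the exact anti-commutativity statement from \cite{a} (including how the switch map enters) and the exact homological effect of $\sigma'\circ\Sigma^2\bar\tau\circ\sigma^{-1}$ before attempting the final cancellation.
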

\begin{proof} Let $(Y,X) =(M_1\times M_2,M_1 \vee M_2)$, $(Y',X')=(M_2\times M_1,M_2 \vee M_1)$, $Z =M_1 \wedge M_2$, and  $Z' =M_2 \wedge M_1$.
If $\rho :(Y,X) \to (Y',X')$ is the switching map, then
$\rho $ determines maps $\rho ': X \to X'$ and $\rho '' : Z \to Z' .$ There is a commutative diagram
$$\xymatrix{\pi _{q_3+1}(Y,X;G_3) \ar[r]^-{\widehat{\eta} }\ar[d]^-{\rho _{\#}\tau ^{*}} &  \mathrm{Hom} (G_3, \pi _{q_3+1} (Z)) \ar[r]^{h_*} \ar[d]^-{(\rho ''_{\#})_*t^* }&  \mathrm{Hom} (G_3, H _{q_3+1} (Z))  
\ar[d]^-{(\rho ''_{*})_*t^* }
\\ \pi _{q_3+1}(Y',X';G'_3) \ar[r]^-{\widehat{\eta '} } &  \mathrm{Hom} (G'_3, \pi _{q_3+1} (Z')) \ar[r]^{h'_*}  &  \mathrm{Hom} (G'_3, H _{q_3+1} (Z')).}$$
If $\omega \in \pi _{q_3}(X;G_3)$ is the universal element with $\omega = \partial (\xi )$ for
$\xi \in \pi _{q_3+1}(Y,X;G_3)$, then $h_* \widehat{\eta } (\xi )= \phi \in  \mathrm{Hom} (G_3, H_{q_3+1} (Z))$. 
We show that if $\xi ' = \rho _{\#} \tau ^{*}(\xi )$, then $h'_* \widehat{\eta '} (\xi ')= (-1)^{\varepsilon}\phi  '$,
where $\phi '  : G'_3\to H_{q_3+1} (Z')$ is the K\'unneth isomorphism.
We have
$$  h'_* \widehat{\eta '} (\xi ')=(\rho ''_{*})_*t^*(h_* \widehat{\eta } (\xi ))=(\rho ''_{*})_*t^*(\phi )=\rho ''_{*}\phi t.$$
It follows immediately from results in (\cite{h}, pp.\,114 and 118) that the following diagram is commutative
$$\xymatrix{G_3\ar[r]^-{(-1)^{\varepsilon}t^{-1}}\ar[d]_{\phi } & G_3' \ar[d]^{\phi '}
\\H_{q_3+1} (Z) \ar[r]^{\rho ''_* }& H _{q_3+1} (Z').}$$
Therefore
$$h'_* \widehat{\eta '} (\xi ')= \rho ''_{*}\phi t =(-1)^{\varepsilon}\phi'  t^{-1}t =(-1)^{\varepsilon}\phi' .$$
We set $\omega ' = \partial ' (\xi '),$ where $\partial ' :\pi _{q_3+1}(Y',X';G_3')
\to \pi _{q_3}( X';G_3').$ Thus $(-1)^{\varepsilon}\omega ' $ is the universal element of an
operation $S$ of type $\{ G_2,G_1,G'_3;q_2,q_1,q_3)$. Note that 
$$ \rho ' \omega \tau =\rho '_{\# }  \tau ^*(\partial \xi ) = \partial '(\rho _{\#}   \tau ^*( \xi ))=
\partial '( \xi ') =\omega '.$$ Let $j_1: M_2 \to M_2 \vee M_1$ and $j_2: M_1 \to M_2 \vee M_1$
be inclusions.Then 
$$S(j_1,j_2) = (-1)^{\varepsilon} \rho ' T(\iota _1, \iota _2)\tau = (-1)^{\varepsilon} T(j_2, j_1)\tau ,$$
and so $$ S(\beta ,\alpha ) = (-1)^{\varepsilon } T(\alpha , \beta )\tau ,$$
for $\alpha \in \pi _{q_1}(W;G_1)$ and $\beta \in \pi _{q_2}(W;G_2)$. The conclusion of the proposition now follows.
\end{proof}
Note that if the operation $T$ is unique, then there is the following anti-commutative rule
$$T(\beta ,\alpha ) = (-1)^{\varepsilon } T(\alpha , \beta )\tau .$$

\begin{corollary} 
\begin{enumerate}
\item If $T$ is a Whitehead product of type $\{G_1, G_2 ; q_1, q_2 \}$,
then the special operation $S$ defined by $ S(\beta ,\alpha ) = (-1)^{q_1q_2} T(\alpha , \beta )\tau $
is a Whitehead product of type $(q_2,q_1;G_2,G_1).$
\item If $T$ is a Torsion product of type $\{G_1, G_2 ; q_1, q_2 \}$,
then the special operation $S$ defined by $ S(\beta ,\alpha ) = (-1)^{q_1q_2 +1} T(\alpha , \beta )\tau $
is a Torsion product of type $(q_2,q_1;G_2,G_1).$
\end{enumerate}
\end{corollary}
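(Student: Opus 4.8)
The plan is to deduce both parts directly from the preceding Proposition (on commutativity of special operations) together with the characterization of Whitehead and Torsion products furnished by Corollary~\ref{two}. The Proposition already establishes that, whenever $T$ is a special operation of type $\{G_1,G_2,G_3;q_1,q_2,q_3\}$, the operation $S$ defined by $S(\beta,\alpha)=(-1)^{\varepsilon}T(\alpha,\beta)\tau$ is a special operation of type $\{G_2,G_1,G_3';q_2,q_1,q_3\}$, where $\varepsilon=q_1q_2$ in the tensor (Whitehead) case and $\varepsilon=q_1q_2+1$ in the Torsion case. So the only remaining point is to observe that ``special operation'' refines to ``Whitehead product'' or ``Torsion product'' according to which of the two possibilities of Remark~\ref{specrmks}~1 the numerical and coefficient data fall into, and that these data are preserved under the switch $(q_1,q_2)\mapsto(q_2,q_1)$, $(G_1,G_2)\mapsto(G_2,G_1)$.

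For part~(1): if $T$ is a Whitehead product of type $\{G_1,G_2;q_1,q_2\}$, then by definition it is a special operation with $q_3=q_1+q_2-1$ and $G_3=G_1\otimes G_2$, so $\varepsilon=q_1q_2$. Applying the Proposition, $S$ is a special operation of type $\{G_2,G_1,G_2\otimes G_1;q_2,q_1,q_2+q_1-1\}$. Since $q_3=q_2+q_1-1$ and $G_3'=G_2\otimes G_1$, this falls under case~1(a) of Remark~\ref{specrmks}, and the hypothesis $q_1,q_2\ge 3$ is symmetric in $q_1,q_2$; hence $S$ is by definition a Whitehead product of type $\{G_2,G_1;q_2,q_1\}$. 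For part~(2): if $T$ is a Torsion product of type $\{G_1,G_2;q_1,q_2\}$, then it is a special operation with $q_3=q_1+q_2$, $G_3=G_1*G_2$, and $q_1,q_2\ge 4$, so $\varepsilon=q_1q_2+1$. The Proposition yields that $S$ is a special operation of type $\{G_2,G_1,G_2*G_1;q_2,q_1,q_2+q_1\}$, which is case~1(b) of Remark~\ref{specrmks}; the side condition $q_1,q_2\ge 4$ is again symmetric, so $S$ is a Torsion product of type $\{G_2,G_1;q_2,q_1\}$.

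I do not anticipate a genuine obstacle here: the content is entirely in the preceding Proposition, and the corollary is a bookkeeping matter of matching the sign $\varepsilon$ to the two cases and checking that the defining conditions of Whitehead and Torsion products (which are just ``special operation'' plus the appropriate degree/coefficient constraints) transfer under the switch. The one point to be careful about is simply quoting the correct value of $\varepsilon$ in each case and noting explicitly that the arithmetic constraints ($q_1,q_2\ge3$ resp.\ $q_1,q_2\ge4$ and the K\"unneth identifications $G_1\otimes G_2\cong G_2\otimes G_1$, $\mathrm{Tor}(G_1,G_2)\cong\mathrm{Tor}(G_2,G_1)$) are symmetric, so no new verification beyond the Proposition is required. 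Accordingly the proof will be two short paragraphs, one per part, each invoking the Proposition and Remark~\ref{specrmks}.
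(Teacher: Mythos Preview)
Your proposal is correct and matches the paper's approach: the paper gives no separate proof of this corollary, treating it as an immediate consequence of the preceding Proposition together with the fact that Whitehead and Torsion products are precisely the special operations falling under cases 1(a) and 1(b) of Remark~\ref{specrmks}.
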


\section{Concluding remarks and results}

\begin{enumerate}
\item{Neisendorfer's approach}

\noindent We consider operations with coefficients $\mathbb{Z}_{p^k}$, $p$ an odd prime and $k\ge 1.$
As mentioned earlier, any operation with finite groups of coefficients of odd order can be expressed in terms of operations with these coefficients. 
\noindent Let $G_1 = G_2 = G_3 = \mathbb{Z}_{p^k} ,$ so that $ G_1 \otimes G_2 = G_1* G_2= \mathbb{Z}_{p^k} ,$ and let $M(i)= 
M(\mathbb{Z}_{p^k},i)$. Then 
Neisendorfer proved that there is a homotopy equivalence $\delta : M(q-2)\vee M(q -1)\to M(q_1-1) \wedge M(q_2 -1) =\bar M_1\wedge \bar M_2$, where $q = q_1 +q_2$ (\cite{n}, p.\,167). 
We suspend and obtain (after the identification of $\Sigma (M(q-2)\vee M(q -1))$ with $M(q-1)\vee M(q)$) a homotopy equivalence
$\delta ': M(q-1)\vee M(q)\to \Sigma (\bar M_1\wedge \bar M_2) .$ If $j_2:M(q)\to  M(q-1)\vee M(q)$ is the inclusion, then we set $\theta = \delta 'j_2: M(q) \to 
\Sigma (\bar M_1\wedge \bar M_2)$ and define an operation ${\bf T}$ of type $\{\mathbb{Z}_{p^k}, \mathbb{Z}_{p^k}, \mathbb{Z}_{p^k} ; q_1, q_2, q_1+ q_2  \}$ by ${\bf T} (\alpha , \beta ) =[\alpha ,\beta ] \theta .$
This operation was originally defined in (\cite{n}, Section 6.3) where many of its properties were studied in detail. It was referred to as a Whitehead product. This may seem puzzling at first since the degrees of ${\bf T}$ are not those of a Whitehead product. But Neisendorfer used a definition of homotopy groups with coefficients which is different from the one we use. He defined them by means of co-Moore spaces (also called Peterson spaces), that is, simply-connected spaces $C(G,n)$ with a single, non-vanishing reduced cohomology group $G$ in degree $n$.  Then these homotopy groups with coefficients, which we shall denote by $\pi _n' ,$ are defined by $\pi _n'(X;G) = [C(G,n),X]$. Clearly $C( \mathbb{Z}_{p^k} ,n) = M( \mathbb{Z}_{p^k} ,n-1)$ and so $\pi _n(X; \mathbb{Z}_{p^k} ) =\pi _{ n+1}'(X; \mathbb{Z}_{p^k} )$. The product {\bf T} then becomes a function $\pi _{ q_1+1}'(X; \mathbb{Z}_{p^k} )\times \pi _{ q_2+1}'(X; \mathbb{Z}_{p^k} ) \to \pi _{ q_1 +q_2 +1}'(X; \mathbb{Z}_{p^k} )$ which are the correct degrees for a Whitehead product.
\medskip

\item Ext operations

\noindent We return to the Ext operations introduced at the end of \S 4 and provide a simple interpretation of them. For $\alpha \in \pi _{q_1}(X)$ and $\beta \in \pi _{q_2}(X)$, let $[\alpha , \beta] \in \pi _{q_1 +q_2-1}(X)$ be the ordinary Whitehead product (that is, the generalized Whitehead product with $\bar M_1=S^{q_1-1}$ and $\bar M_2=S^{q_2-1}$), let $M_{k,j} $ be the Moore space $M(\mathbb{Z}_k,j)$,
with $k\ge 2$ and $j\ge 3$, and let $q=q_1 +q_2$. Then $M_{k,q-2} $ is the mapping cone 
$S^{q-2}\cup _{\bf k} CS^{q-2}$, where ${\bf k}: S^{q-2}\to S^{q-2}$ is a map of degree k. A projection 
$p: M_{k,q-2} \to S^{q-1}$ is obtained by collapsing $S^{q-2} \subseteq M_{k,q-2}$ to a point and
$$p \in \pi _{q-2}(S^{q-1};\mathbb{Z}_k) \approx \mathrm{Ext}(\mathbb{Z}_k, \pi _{q-1}(S^{q-1}))\approx \mathbb{Z}_k.$$
By applying $[-,S^{q-1}]$ to the sequence $ \xymatrix{S^{q-2}\ar[r] &M_{k,q-2} \ar[r]^p& S^{q-1}}$
we obtain an exact sequence of homotopy groups,
from which it follows that $p$ is a generator of the group.
\begin{prop} The set $\{ [\iota _1, \iota _2](ip)\, | \, i=0,1,...,k-1 \}$ is equal to the set of universal elements of the k Ext operations. In particuliar, if $T$ is an Ext operation and $\alpha \in \pi _{q_1}(X)$ and 
$\beta \in \pi _{q_2}(X)$, Then $T(\alpha , \beta) = [\alpha ,\beta ](ip),$ for some $i \in \{0,1,...,k-1\}$.
\end{prop}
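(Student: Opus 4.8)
The plan is to identify the Ext operations with the $k$ basic operations constructed at the end of §4, and then to show that these are exactly the operations $[\alpha,\beta](ip)$. Recall that the $k$ Ext operations of type $\{\mathbb{Z},\mathbb{Z},\mathbb{Z}_k;q_1,q_2,q-2\}$ were defined via the monomorphism $\partial\lambda:\mathrm{Ext}(\mathbb{Z}_k,\pi_q(Y,X))\to\pi_{q-2}(M_1\vee M_2;\mathbb{Z}_k)$, with $(Y,X)=(S^{q_1}\times S^{q_2},S^{q_1}\vee S^{q_2})$; here $M_1=S^{q_1}$, $M_2=S^{q_2}$, so $\bar M_1=S^{q_1-1}$, $\bar M_2=S^{q_2-1}$, and $\Sigma(\bar M_1\wedge\bar M_2)=S^{q-1}$. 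Since each Ext operation is basic and $q-2<q_1+q_2+\min(q_1,q_2)-3$, Theorem \ref{theta} applies: every Ext operation $T$ has the form $T(\alpha,\beta)=[\alpha,\beta]\,\theta_T$ for a unique $\theta_T\in\pi_{q-2}(S^{q-1};\mathbb{Z}_k)=[M_{k,q-2},S^{q-1}]$. So it suffices to show that as $T$ ranges over the $k$ Ext operations, $\theta_T$ ranges over $\{ip\mid i=0,1,\dots,k-1\}=[M_{k,q-2},S^{q-1}]$.

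First I would compute the group $[M_{k,q-2},S^{q-1}]=\pi_{q-2}(S^{q-1};\mathbb{Z}_k)$. By the Universal Coefficient Theorem this sits in $0\to\mathrm{Ext}(\mathbb{Z}_k,\pi_{q-1}(S^{q-1}))\to\pi_{q-2}(S^{q-1};\mathbb{Z}_k)\to\mathrm{Hom}(\mathbb{Z}_k,\pi_{q-2}(S^{q-1}))\to 0$; since $q-2<q-1$ the Hom term vanishes, so $\pi_{q-2}(S^{q-1};\mathbb{Z}_k)\approx\mathrm{Ext}(\mathbb{Z}_k,\mathbb{Z})\approx\mathbb{Z}_k$, generated by $p$ as the paragraph preceding the proposition records (apply $[-,S^{q-1}]$ to $S^{q-2}\to M_{k,q-2}\xrightarrow{p}S^{q-1}$). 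Thus $\{ip\mid 0\le i\le k-1\}$ is precisely the set of all homotopy classes in $[M_{k,q-2},S^{q-1}]$, i.e. every possible $\theta$. Next I would show the assignment $T\mapsto\theta_T$ is a bijection from the set of $k$ Ext operations onto $\pi_{q-2}(S^{q-1};\mathbb{Z}_k)$: this follows because $\widetilde k_{\#}:\pi_{q-2}(S^{q-1};\mathbb{Z}_k)\to\pi_{q-2}(M_1\vee M_2;\mathbb{Z}_k)$ is injective with image $\mathrm{Im}\,\partial$ (shown in the proof of Theorem \ref{theta}), and the Ext universal elements $\partial\lambda(\mathrm{Ext}(\mathbb{Z}_k,\pi_q(Y,X)))$ are exactly the $k$ elements of $\mathrm{Im}\,\partial$ that map down from $\pi_{q-2}(S^{q-1};\mathbb{Z}_k)$ — one checks $\mathrm{Im}\,\partial=\mathrm{Im}(\partial\lambda)$ here since, in the range $q-2\le q_1+q_2-3$ fails but the relevant $\pi_q(Y,X)\approx\mathbb{Z}$ forces $\pi_{q-1}(Y,X;\mathbb{Z}_k)=\lambda(\mathrm{Ext}(\mathbb{Z}_k,\mathbb{Z}))\approx\mathbb{Z}_k$ via the UCT, so $\partial$ has source of order $k$. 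Hence the $k$ universal elements $[\iota_1,\iota_2]\theta_T$ exhaust those with $\theta_T\in\{ip\}$, and conversely each $ip$ arises.

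Finally, combining: the universal element of the Ext operation with $\theta_T=ip$ is $(\widetilde k_{\#})(ip)=[\iota_1,\iota_2](ip)$ by the defining property $\widetilde k_{\#}(\theta)$ expressed through the generalized Whitehead product, and so $\{[\iota_1,\iota_2](ip)\mid 0\le i\le k-1\}$ is exactly the set of universal elements of the $k$ Ext operations. Naturality of the generalized Whitehead product then gives, for $[f]=\alpha$, $[g]=\beta$, that $T(\alpha,\beta)=(f,g)_{\#}T(\iota_1,\iota_2)=(f,g)_{\#}[\iota_1,\iota_2](ip)=[\alpha,\beta](ip)$, which is the second assertion. I expect the main obstacle to be the bookkeeping in the middle step: verifying cleanly that $\mathrm{Im}\,\partial\subseteq\pi_{q-2}(M_1\vee M_2;\mathbb{Z}_k)$ coincides with the image of $\partial\lambda$ — equivalently that $\pi_{q-1}(S^{q_1}\times S^{q_2},S^{q_1}\vee S^{q_2};\mathbb{Z}_k)$ is entirely Ext, i.e. that the Hom part of its UCT sequence vanishes in this degree — and that the identification $\Sigma(\bar M_1\wedge\bar M_2)=S^{q-1}$ and $M(\mathbb{Z}_k,q-2)=M_{k,q-2}$ matches the map $\theta$ with a multiple of the collapse map $p$; this last point uses only that $p$ generates the cyclic group $[M_{k,q-2},S^{q-1}]$, which was already established.
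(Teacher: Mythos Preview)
Your proposal is correct and lands on the same ingredients as the paper's sketch (the paper omits the full proof but outlines it). Both arguments rest on the injectivity of $\widetilde{k}_{\#}$ from Theorem~\ref{theta} and on the UCT identification $\pi_{q-1}(Y,X;\mathbb{Z}_k)\approx\mathrm{Ext}(\mathbb{Z}_k,\mathbb{Z})\approx\mathbb{Z}_k$. The only difference is the direction in which the inclusion is checked: the paper starts from $[\iota_1,\iota_2]p$, explicitly sets $\xi=\Lambda'(Cp)\in\pi_{q-1}(Y,X;\mathbb{Z}_k)$ so that $\partial\xi=[\iota_1,\iota_2]p$, observes $\eta(\xi)=0$ (trivially, since $\pi_{q-1}(Y,X)=0$) so that $\xi\in\image\lambda$, and then counts; you instead start from an arbitrary Ext operation, invoke Theorem~\ref{theta} to get $\theta_T\in\pi_{q-2}(S^{q-1};\mathbb{Z}_k)=\{ip\}$, and match cardinalities via $\image\widetilde{k}_{\#}=\image\partial=\image(\partial\lambda)$. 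Your route is slightly cleaner in that it avoids writing down $\xi$ at all; the paper's route makes the link to Proposition~\ref{specequiv} more visible. One small wording issue: your parenthetical ``in the range $q-2\le q_1+q_2-3$ fails'' is confusing as written---what you need (and use) is simply that $\mathrm{Hom}(\mathbb{Z}_k,\pi_{q-1}(Y,X))=0$, which follows from $\pi_{q-1}(Y,X)\approx\pi_{q-1}(S^q)=0$.
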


\noindent The proof is omitted, though we make a few comments about it. One shows that $ [\iota _1, \iota _2]p$ is basic
as in the proof of Proposition \ref{specequiv} by taking $\xi = \Lambda '(Cp) \in \pi _{q-1}(S^{q_1} \times S^{q_2}, S^{q_1} \vee S^{q_2}; \mathbb{Z}_k)$ so $\partial \xi = [\iota _1, \iota _2]p$.
Then $\eta (\xi) =0$ and so $\xi $ is in Ker $\eta $. Lastly, the set $\{ [\iota _1, \iota _2](ip)\, | \, i=0,1,...,k-1 \}$ has k elements since $\widetilde{k}_{\#}$ is one-one as in Theorem \ref{theta}.

\item Smash product of two Moore spaces
\begin{theorem}\label{wedge} Let $G_1$ and $G_2$ be finitely-generated abelian groups such that neither $G_1$  nor $G_2$
has 2-torsion. Then there is a homotopy equivalence
$$ M(G_1,q_1) \wedge  M(G_2,q_2) \equiv M(G_1 \otimes G_2, q_1 +q_2) \vee M(G_1 * G_2, q_1 +q_2 +1).$$
\end{theorem}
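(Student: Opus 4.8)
The plan is to reduce the statement, by the standard decompositions of Moore spaces and of finitely-generated abelian groups, to a single mapping-cone computation whose crux is the order of the identity map of a mod-$p^r$ Moore space.

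\textbf{Reduction to cyclic coefficients.} Write $G_i \approx \mathbb{Z}^{a_i}\oplus T_i$ with $T_i$ finite of odd order, and decompose $T_i$ into cyclic summands of prime-power order, so that $M(G_i,q_i)\equiv \bigvee_j M(C_{ij},q_i)$ where each $C_{ij}$ is $\mathbb{Z}$ or $\mathbb{Z}_{p^r}$ for an odd prime $p$ (recall $q_1,q_2\ge 2$, so all these Moore spaces are defined and simply connected). Since smash distributes over wedges and since $\otimes$ and $\mathrm{Tor}$ commute with finite direct sums (and $\mathbb{Z}$ is a unit for $\otimes$ and annihilates $\mathrm{Tor}$), it suffices to prove the equivalence when $G_1,G_2$ are each $\mathbb{Z}$ or a cyclic group of odd prime-power order, and then to reassemble by matching the resulting wedge decomposition of $M(G_1,q_1)\wedge M(G_2,q_2)$ term-by-term with that of $M(G_1\otimes G_2,q_1+q_2)\vee M(G_1*G_2,q_1+q_2+1)$. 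The cases involving a $\mathbb{Z}$ are immediate: $S^{q_1}\wedge S^{q_2}=S^{q_1+q_2}=M(\mathbb{Z},q_1+q_2)$ and $S^{q_1}\wedge M(\mathbb{Z}_{p^r},q_2)=\Sigma^{q_1}M(\mathbb{Z}_{p^r},q_2)=M(\mathbb{Z}_{p^r},q_1+q_2)$, which agree with $\mathbb{Z}\otimes\mathbb{Z}=\mathbb{Z}$, $\mathbb{Z}\otimes\mathbb{Z}_{p^r}=\mathbb{Z}_{p^r}$ and $\mathbb{Z}*(-)=0$.

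\textbf{The prime-power case as a mapping cone.} Let $G_1=\mathbb{Z}_{p^a}$ and $G_2=\mathbb{Z}_{q^b}$. Writing $M(\mathbb{Z}_{q^b},q_2)$ as the cofibre of the degree-$q^b$ self-map of $S^{q_2}$ and smashing with $M(\mathbb{Z}_{p^a},q_1)$, we realize $M(\mathbb{Z}_{p^a},q_1)\wedge M(\mathbb{Z}_{q^b},q_2)$ as the cofibre of $f:=\mathrm{id}_{M(\mathbb{Z}_{p^a},q_1)}\wedge d$, where $d\colon S^{q_2}\to S^{q_2}$ has degree $q^b$, viewed as a self-map of $M(\mathbb{Z}_{p^a},q_1)\wedge S^{q_2}=M(\mathbb{Z}_{p^a},q_1+q_2)$. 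Since the latter is a suspension and $d$ is the $q^b$-fold fold of the $q^b$-fold pinch of $S^{q_2}$, one gets $f\simeq q^b\cdot\mathrm{id}$, i.e.\ $f$ is $q^b$ times the identity in the group $[M(\mathbb{Z}_{p^a},q_1+q_2),M(\mathbb{Z}_{p^a},q_1+q_2)]$.

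\textbf{The key input and conclusion.} I would now invoke the classical fact that, for $p$ an odd prime and $N\ge 3$, the identity map of $M(\mathbb{Z}_{p^r},N)$ has order dividing $p^r$, i.e.\ $p^r\cdot\mathrm{id}\simeq 0$; here $N=q_1+q_2\ge 4$. If $p=q$, assume $a\le b$; then $f=q^b\cdot\mathrm{id}=p^{b-a}\cdot(p^a\cdot\mathrm{id})\simeq 0$, so its cofibre is $M(\mathbb{Z}_{p^a},q_1+q_2)\vee\Sigma M(\mathbb{Z}_{p^a},q_1+q_2)=M(\mathbb{Z}_{p^a},q_1+q_2)\vee M(\mathbb{Z}_{p^a},q_1+q_2+1)$, which is the asserted form since $\mathbb{Z}_{p^a}\otimes\mathbb{Z}_{p^b}=\mathbb{Z}_{p^a}=\mathrm{Tor}(\mathbb{Z}_{p^a},\mathbb{Z}_{p^b})$. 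If $p\ne q$, then $q^b$ is a unit mod $p^a$, and using $(c\cdot\mathrm{id})\circ(c'\cdot\mathrm{id})=cc'\cdot\mathrm{id}$ together with $p^a\cdot\mathrm{id}\simeq 0$ shows $f$ is a homotopy equivalence, so its cofibre is contractible, matching $\mathbb{Z}_{p^a}\otimes\mathbb{Z}_{q^b}=0=\mathrm{Tor}(\mathbb{Z}_{p^a},\mathbb{Z}_{q^b})$. Reassembling over all summands, and using that $G_1*G_2=\mathrm{Tor}(G_1,G_2)$ is killed by free summands, completes the proof. The main obstacle is the order-of-the-identity fact: for $p$ odd and $N\ge 3$ it is known but nontrivial, and it is precisely where the no-$2$-torsion hypothesis is needed, since $2^{r+1}\cdot\mathrm{id}_{M(\mathbb{Z}_{2^r},N)}$ can be nonzero; the remaining steps are formal, though the bookkeeping in the reassembly and the identification $f\simeq q^b\cdot\mathrm{id}$ should be written out with care.
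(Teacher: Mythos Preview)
Your argument is correct, but it takes a genuinely different route from the paper's. The paper does not decompose into cyclic summands first; instead it works globally with $G_1,G_2$. Using a homology decomposition it realizes $M(G_1,q_1)\wedge M(G_2,q_2)$ as a single mapping cone $M(G_1\otimes G_2,q)\cup_l CM(G_1*G_2,q)$ with $l$ homologically trivial, and then shows $l\simeq 0$ via the Universal Coefficient exact sequence for $\pi_q(-;G_1*G_2)$: since $\eta(l)=0$, one is reduced to proving that $\mathrm{Ext}\bigl(G_1*G_2,\pi_{q+1}(M(G_1\otimes G_2,q))\bigr)=0$, which follows from $\pi_{m+1}(M(\mathbb{Z}_n,m))=0$ for odd $n$ together with the fact that $\pi_{q+1}$ of a wedge of $q$-spheres is a sum of copies of $\mathbb{Z}_2$. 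Your approach, by contrast, splits into cyclic pieces and computes each smash as an explicit cofiber of $q^b\cdot\mathrm{id}$ on an odd-primary Moore space, invoking the (essentially equivalent) fact that $p^r\cdot\mathrm{id}_{M(\mathbb{Z}_{p^r},N)}\simeq 0$ for odd $p$. Your proof is more concrete and avoids the homotopy-with-coefficients UCT; the paper's is more uniform, packaging the obstruction into a single Ext group and making the one-sided $2$-torsion extension (their subsequent Remark) a one-line modification of the same computation.
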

\begin{proof} Let $M_i = M(G_i,q_i)$ for $i=1,2$, let $q=q_1 +q_2$, and let $G_3 =G_1\otimes G_2$ and 
$\bar{
G_3 } = 
G_1*G_2$. It is easily seen (e.g., by a homology decomposition) that there is a map $l : M(\bar{
G_3 }, q)\to M(G_3, q)$ such that  
$ M(G_1,q_1) \wedge  M(G_2,q_2)$ has the homotopy type of the mapping cone $M(G_3, q)\cup _l CM(\bar{
G_3 }, q)$ and that $l$ is homologically trivial. We shall show that $l=0$. Now $l \in \pi _q (M(G_3, q); \bar{
G_3 }) $ and, with $M = M(G_3,q)$, we consider 
$$\xymatrix{\pi _q (M;\bar{
G_3 })\ar[r]^-{\eta } & \mathrm{Hom} (\bar{
G_3 }, \pi _q (M))\ar[r]^-{h_*}
&\mathrm{Hom}(\bar{
G_3 }, H _q (M)),}$$
where $\eta $ is the Universal Coefficient homomorphism and $h_*$ is induced by the Hurewicz isomorphism $h$. Then
$$ h_*(\eta (l)) = hl_{\#} = l_* =0 ,$$
and so $\eta (l) =0.$ Therefore by exactness of the Universal Coefficient Theorem, $l = \lambda (\tilde{l}),$ for
$\tilde{l}\in \mathrm{Ext}(\bar{G_3}, \pi _{q+1}(M(G_3,q)))$. It suffices to show that $\tilde{l} =0$. 
We set $E = \mathrm{Ext}(\bar{G_3}, \pi _{q+1}(M(G_3,q)))$ and show that 
$E=0$.
We write $G_i =F_i \oplus T_i$, $i=1, 2$,
where $F_i$ is a free-abelian group and $T_i$ is a finite torsion group. Then $E = \mathrm{Ext}(T_1 * T_2,A\oplus B \oplus C \oplus D)$, where $A=\pi _{q+1}(M(F_1\otimes F_2,q))$, $B = \pi _{q+1}(M(F_1\otimes T_2,q))$, $C=\pi _{q+1}(M(T_1\otimes F_2,q))$, and $D = \pi _{q+1}(M(T_1\otimes T_2,q))$. Then each of 
$F_1\otimes T_2$, $T_1\otimes F_2$, and $T_1\otimes T_2$ is a finite direct sum of cyclic
groups of order a power of an odd prime. But $\pi _{m+1}(M(\mathbb{Z}_n,m)) =0 $ if $n$ is odd (\cite{b}, p.\,268).
Therefore $B=C= D= 0$. Thus $ E =\mathrm{Ext}(T_1*T_2, A) =  \mathrm{Ext}(T_1*T_2, \pi _{q+1}(M(F_1\otimes F_2,q)))$.
Now $F_1\otimes F_2$ is a direct sum of finitely many copies of $\mathbb{Z}$ and so $M(F_1\otimes F_2,q)$ is a wedge of finitely many $q$-spheres $S^q_i$. Hence $A=\pi _{q+1}(M(F_1\otimes F_2,q))$ is a direct sum of terms
$\pi _{q+1}( S^q_i)$, that is, a direct sum of copies of $\mathbb{Z}_2$. Therefore $ E =\mathrm{Ext}(T_1*T_2, A) =0$
and so $l=0$. It follows that the mapping cone is a wedge of $M(G_1 \otimes G_2, q_1 +q_2) $ and $M(G_1 * G_2, q_1 +q_2 +1).$ This completes the proof.
\end{proof}
\begin{rmk} {\em Theorem \ref{wedge} holds if either $G_1$ or $G_2$ has 2-torsion (but not both). For definiteness suppose that $G_1$ has 2-torsion and $G_2$ does not. Then $T_1\otimes F_2$ is a finite direct sum of cyclic
groups of order a power of a prime including the prime 2. Thus $C=\pi _{q+1}(M(T_1\otimes F_2,q))$ is a finite direct sum of copies of $\mathbb{Z}_2$ (\cite{b}, p.\,268) and it follows that $\mathrm{Ext}(T_1*T_2, C) =0$.}
\end{rmk}

\item  Moore vs.\,co-Moore spaces

\noindent As a final comment we observe that there are advantages and disadvantages to using either co-Moore spaces or Moore spaces for coefficients. Co-Moore spaces are the dual of Eilenberg-MacLane spaces within the context of Eckmann-Hilton duality, where homotopy groups and cohomology groups are considered dual to each other, but co-Moore spaces do not exist for every group $G$ \cite{kw}. Moore spaces exist for every group, but they are not dual to Eilenberg-MacLane spaces.

\end{enumerate} 
\medskip


\end{document}